\newtheorem{te}{Theorem}[section]
\theoremstyle{lem}
\theoremstyle{coro}
\newtheorem{coro}[te]{Corollary}
\theoremstyle{definition}
\theoremstyle{os}
\newtheorem{os}[te]{Remark}
\numberwithin{equation}{section}
\begin{document}

\title[Hyper-Cauchy laws]{Higher-order Laplace equations\\ and hyper-Cauchy distributions}

\author{Enzo Orsingher} 
\address{Dipartimento di Scienze Statistiche, Sapienza University of Rome - p.le A. Moro 5, 00185 Roma, Italy}
\email{enzo.orsingher@uniroma1.it}

\author{Mirko D'Ovidio}
\address{Dipartimento di Scienze di Base e Applicate per l'Ingegneria, Sapienza University of Rome - via A. Scarpa 10, 00161 Roma, Italy}
\email{mirko.dovidio@uniroma1.it}
 
\keywords{Pseudo-processes, stable processes, Cauchy processes, higher-order Laplace equations, Airy functions, modified Bessel functions.}

\date{\today}

\subjclass[2010]{60G52,35C05}

\begin{abstract}
In this paper we introduce new distributions which are solutions of higher-order Laplace equations. It is proved that their densities can be obtained by folding and symmetrizing Cauchy distributions. Another class of probability laws related to higher-order Laplace equations is obtained by composing pseudo-processes with positively-skewed stable distributions which produce asymmetric Cauchy densities in the odd-order case. A special attention is devoted to the third-order Laplace equation where the connection between the Cauchy distribution and the Airy functions is obtained and analyzed.
\end{abstract}

\maketitle

\section{Introduction}
The Cauchy density 
\begin{equation}
p(x,t)=\frac{1}{\pi}\frac{t}{(x^2+t^2)} \label{Cauchylaw}
\end{equation}
solves the Laplace equation (see \citet{NANE08})
\begin{equation}
\frac{\partial^2 u}{\partial t^2} + \frac{\partial^2 u}{\partial x^2}=0, \quad  x \in \mathbb{R},\; t>0.
\end{equation}
The $n$-dimensional counterpart of \eqref{Cauchylaw}
\begin{equation}
p(\mathbf{x}, t) = \frac{\Gamma\left( \frac{n}{2} \right)}{\pi^{\frac{n}{2}}} \frac{t}{\left( t^2 + |\mathbf{x}|^2 \right)^{\frac{n}{2}}}, \quad \mathbf{x} \in \mathbb{R}^{n-1},\; t>0
\end{equation}
with characteristic function
\begin{equation}
\int_{\mathbb{R}^{n-1}} e^{i \langle \boldsymbol{\alpha}, \mathbf{x} \rangle} p(\mathbf{x}, t) d\mathbf{x} = \exp \left( -t |\boldsymbol{\alpha}| \right)
\end{equation}
solves the $n$-dimensional Laplace equation
\begin{equation}
\frac{\partial^2 p}{\partial t^2}  + \sum_{j=1}^{n-1} \frac{\partial^2 p}{\partial x_j^2}=0.
\end{equation}
The inspiring idea of this paper is to investigate the class of distributions which satisfy the higher-order Laplace equations of the form
\begin{equation}
\frac{\partial^n u}{\partial t^n} + \frac{\partial^n u}{\partial x^n}=0, \quad x \in \mathbb{R},\; t>0
\end{equation}
In a previous paper of ours we have shown that the law
\begin{equation}
p_4(x,t) = \frac{t}{\pi \sqrt{2}} \frac{x^2+t^2}{x^4+t^4} \label{pquattro}
\end{equation}
solves the fourth-order Laplace equation 
\begin{equation}
\frac{\partial^4 u}{\partial t^4} + \frac{\partial^4 u}{\partial x^4}=0, \quad x \in \mathbb{R},\; t>0.
\end{equation}
In Section 2 we analyze distributions related to equations of the form
\begin{equation}
\frac{\partial^{2^n} u}{\partial t^{2^n}} + \frac{\partial^{2^n} u}{\partial x^{2^n}}=0 \label{decapDO}
\end{equation}
which can be expressed in many alternative forms. The decoupling of the $2^n$-th order differential operator in \eqref{decapDO} 
\begin{equation*}
 \frac{\partial^{2^n}}{\partial t^{2^n}} + \frac{\partial^{2^n}}{\partial x^{2^n}}= \prod_{\begin{subarray}{c} k=-(2^{n-1} - 1)\\ k \; odd \end{subarray}}^{2^{n-1}-1} \left( \frac{\partial^2}{\partial t^2} + e^{i \frac{\pi k}{2^{n-1}}} \frac{\partial^2}{\partial x^2} \right)
\end{equation*}
suggests to represent distributions related to \eqref{decapDO} as
\begin{equation}
p_{2^n}(x,t) = \frac{1}{\pi 2^{n-1}} \sum_{\begin{subarray}{c} k=-(2^{n-1} - 1)\\ k \; odd \end{subarray}}^{2^{n-1}-1}  \frac{t\, e^{i\frac{\pi k}{2^n}}}{x^2 + ( t e^{i \frac{\pi k}{2^{n}}})^2}, \quad n \geq 2.
\label{pdueenne1}
\end{equation}
that is the superposition of Cauchy densities at imaginary times. Alternatively, we give a real-valued expression for \eqref{pdueenne1} as
\begin{equation}
p_{2^n}(x,t) = \frac{t (x^2 + t^2)}{2^{n-2} \pi (x^{2^n} + t^{2^n})} \sum_{\begin{subarray}{c} k=1 \\ k \textrm{ odd} \end{subarray}}^{2^{n-1}-1} \cos \frac{k\pi}{2^n} \prod_{\begin{subarray}{c} j=1,\,j \textrm{ odd}\\ j \neq k \end{subarray}}^{2^{n-1}-1}\left( x^4 + t^4 + 2x^2 t^2 \cos \frac{j\pi}{2^{n-1}} \right). \label{pdueenne2}
\end{equation}
The density \eqref{pdueenne2} can also be represented as
\begin{equation}
p_{2^n}(x, t) = \frac{t(x^2 +t^2 )}{2^{n-2}\pi} \sum_{\begin{subarray}{c} k=1 \\ k \textrm{ odd} \end{subarray}}^{2^{n-1}-1} \frac{\cos \frac{k \pi}{2^n}}{x^4 + t^4 + 2x^2 t^2 \cos \frac{k \pi}{2^{n-1}}}, \quad n \geq 2. \label{pdueenne3}
\end{equation}
Each component of the distribution \eqref{pdueenne3} is produced by folding and symmetrizing the density of the r.v.
$$V(t) = C\left( t \cos \frac{k\pi}{2^n} \right) - t \sin \frac{k \pi}{2^n}, \quad t>0,\; 1 \leq k \leq 2^{n-1}-1, \; k \textrm{ odd}$$
where $C(t)$, $t>0$ is the Cauchy symmetric process. The distributions \eqref{pdueenne3} differ from the Cauchy laws since they have a bimodal structure for all $n \geq 2$ as figures below show. For $n=2$, the distribution \eqref{pdueenne2} reduces to \eqref{pquattro} if we assume that the inner product appearing in formula \eqref{pdueenne2} is equal to one. Of course, the density  \eqref{pdueenne3} coincides with \eqref{pquattro} for $n=2$. For $n=3$ we get from \eqref{pdueenne2} and \eqref{pdueenne3} that
\begin{align}
p_{2^3}(x,t) = & \frac{t(x^2 + t^2)}{2\pi (x^8 + t^8)} \left[ (x^4 + t^4 - \sqrt{2} x^2 t^2)\cos \frac{\pi}{8} + (x^4 + t^4 + \sqrt{2}x^2 t^2)\sin \frac{\pi}{8} \right]\nonumber \\
= & \frac{t(x^2 + t^2)}{2 \pi} \left[ \frac{\sin \frac{\pi}{8}}{x^4 + t^4 - \sqrt{2}x^2 t^2} + \frac{\cos \frac{\pi}{8}}{x^4 + t^4 + \sqrt{2} x^2 t^2} \right]. \label{pdueenne4}
\end{align}
In \citet{DO3} we have shown that the density \eqref{pquattro} is the probability distribution of
\begin{equation*}
Q(t) = F(T_t), \quad t>0
\end{equation*}
where $F$ is the Fresnel pseudo-process and $T_t$, $t>0$ is the first passage time of a Brownian motion independent from $F$. The pseudo-process $F$ is constructed in \cite{DO3} by means of the fundamental solution (representing the density of the pseudo-process $F$) 
\begin{equation*}
u(x,t) = \frac{1}{\sqrt{4\pi t}} \cos \left( \frac{x^2}{2t} - \frac{\pi}{4} \right), \quad x\in \mathbb{R}, \; t>0
\end{equation*}  
of the equation of vibrations of rods
\begin{equation}
\frac{\partial^2 u}{\partial t^2} = - \frac{1}{2^2}\frac{\partial^4 u}{\partial x^4}. \label{eq-rods}
\end{equation}
The Fourier transform $U(\beta, t)$ of $u(x,t)$ is
$$U(\beta, t) = \cos \frac{\beta^2 t}{2}.$$ 
It is a new non-Markovian pseudo-process which can be analysed by means of the decoupling of \eqref{eq-rods} into two Schr\"{o}dinger equations. We note that
\begin{equation*}
\mathcal{Q}(t) = F(|B(t)|), \quad t>0
\end{equation*}
has density coinciding with the fundamental solution of the fourth-order heat equation 
\begin{equation*}
\frac{\partial u}{\partial t} = - \frac{\partial^4 u}{\partial x^4}.
\end{equation*}

We prove also that, for $k \in \mathbb{N}$, there are non-centered  Cauchy distributions which solve the equations
\begin{equation}
\frac{\partial^{2k+1} u}{\partial t^{2k+1}} + \frac{\partial^{2k+1}u}{\partial x^{2k+1}}=0.
\end{equation}
If $X_{2k+1}(t)$, $t>0$ is the pseudo-process whose density measure 
$$\mu_{2k+1}(dx, t) = \mu \{ X_{2k+1}(t) \in dx \}$$ 
solves the heat-type equations
\begin{equation}
\frac{\partial u}{\partial t} = - \frac{\partial^{2k+1} u}{\partial x^{2k+1}}, \quad k \in \mathbb{N} \label{eqHO}
\end{equation}
and $S_{\frac{1}{2k+1}}(t)$, $t>0$ is a positively skewed stable process  of order $\frac{1}{2k+1}$ we have that
\begin{equation}
Pr\{ X_{2k+1}(S_{\frac{1}{2k+1}}(t)) \in dx \}/dx = \frac{t\cos\frac{\pi}{2(2k+1)}}{\pi \left[ \left(x + (-1)^{k+1} t\sin \frac{\pi}{2(2k+1)}\right)^2 + t^2 \cos^2 \frac{\pi}{2(2k+1)}\right]}. \label{eq118}
\end{equation}
Pseudo-processes are constructed by attributing to cylinders 
\begin{equation*}
C = \left\lbrace x : \, a_j \leq x(t_j) \leq b_j , \; j =1,2,\ldots , n \right\rbrace
\end{equation*}
of sample paths $x: t \mapsto x(t)$ the following signed measure
\begin{equation*}
\mu(C) = \int_{a_1}^{b_1} \cdots \int_{a_n}^{b_n} \prod_{j=1}^n p_n(x_j-x_{j-1}; t_j - t_{j-1})dx_j
\end{equation*}
and therefore has the following explicit form
\begin{equation*}
\mu_{2k+1}(x, t) = \frac{1}{2\pi} \int_{-\infty}^{+\infty} e^{-i \xi x - t(-i\xi )^{2n+1} }d\xi .
\end{equation*}
The extension of the measure $\mu(C)$ to the field generated by the cylinders $C$ is described in \cite{Hoc78} for the fourth-order pseudo-process and can be adopted also in the odd-order case treated here (see also \cite{kry60, LCH03, Lad63, Ors91}). A review of higher-order equations appearing in different areas of applied sciences can be found in \cite{beghin-et-all-2007}.

We show below that the densities \eqref{eq118} solve also the following second-order p.d.e.
\begin{equation*}
\frac{\partial^2 u}{\partial t^2} + \frac{\partial^2 u}{\partial x^2} = 2 \sin \frac{\pi}{2(2k+1)} \frac{\partial^2 u}{\partial t\, \partial x}.
\end{equation*}
We have investigated in detail the case of third-order Laplace equation 
\begin{equation}
\frac{\partial^3 u}{\partial t^3} + \frac{\partial^3 u}{\partial x^3}=0
\end{equation}
and have shown that
\begin{align}
Pr\{ X_3(S_{\frac{1}{3}}(t)) \in dx \} = & dx \int_0^\infty \frac{1}{\sqrt[3]{3s}} Ai\left( \frac{x}{\sqrt[3]{3s}} \right) \frac{t}{s} \frac{1}{\sqrt[3]{3s}} Ai\left( \frac{t}{\sqrt[3]{3s}} \right) ds  \\
= & dx \frac{\sqrt{3}}{2} t \frac{x-t}{x^3 - t^3} = dx\frac{\sqrt{3}}{2} \frac{t}{x^2 + xt + t^2}\nonumber  \\
= & dx \frac{t\cos \frac{\pi}{6}}{(x + t \sin \frac{\pi}{6})^2 + t^2 \cos^2\frac{\pi}{6}}. \nonumber
\end{align}
The pictures of the Cauchy distributions \eqref{eq118} show that the location parameter $ t \sin \frac{\pi}{2(2k+1)}$ tends to zero as $k \to \infty$ while the scale parameter tends to one, $t \cos \frac{\pi}{2(2k+1)} \to t$. This means that the asymmetry of the Cauchy densities decreases as $k$ increases and is maximal for $k=1$. The decrease of parameters of \eqref{eq118} (with $k$ increasing) is due to the growing symmetrization of the fundamental solutions of equations \eqref{eqHO}.

By suitably combining the distribution \eqref{eq118} for $k=1$, we arrive at the density 
\begin{equation}
p_6(x,t) = \frac{\sqrt{3}}{2\pi} t \frac{(x^2 + t^2)\cos \frac{\pi}{6} + xt}{\left(x^2 + t^2 + xt \cos \frac{\pi}{6}\right)^2  - 3 x^2 t^2 \sin^2\frac{\pi}{6}} \label{eq6intro}
\end{equation}
which solves the equation
\begin{equation}
\frac{\partial^6 u}{\partial t^6} + \frac{\partial^6 u}{\partial x^6}=0. 
\end{equation}
The probability density \eqref{eq6intro} displays the unimodal structure of the Cauchy distribution.

\section{Hyper-Cauchy distributions}
In this section we analyze the distribution related to Laplace-type equations of the form 
\begin{equation}
\left( \frac{\partial^{2^n}}{\partial t^{2^n}} + \frac{\partial^{2^n}}{\partial x^{2^n}} \right) u=0, \quad n>1.
\end{equation}
For $n \geq 2$ we obtain a new class of distributions having the form
\begin{equation}
p_{2^n}(x, t) = \frac{t(x^2 + t^2)}{2^{n-2}\pi (x^{2^n} + t^{2^n})} g(x, t), \quad x \in \mathbb{R}, \; t>0 \label{classF}
\end{equation}
where $g(x, t)$ is a polynomial of order $2^n - 2^2$. For $n=2$, formula \eqref{classF} yields the distribution
\begin{equation}
p_4(x, t) = \frac{t(x^2 + t^2)}{\sqrt{2} \pi (x^4 + t^4)}, \quad x \in \mathbb{R}, \; t>0
\end{equation}
emerging in the analysis of Fresnel pseudo-processes (see \citet{DO3}). 

The main result of this section is given in the next theorem.
\begin{te}
The hyper-Cauchy density 
\begin{align}
p_{2^n}(x, t) = & \frac{1}{\pi 2^{n-1}} \sum_{\begin{subarray}{c} k= - (2^{n-1}-1)\\ k \textrm{ odd} \end{subarray}}^{2^{n-1}-1} \frac{t e^{i \frac{\pi k}{2^n}}}{x^2 + (t e^{i \frac{\pi k}{2^n}})^2}\label{eqWuno}
\end{align}
solves the equation
\begin{equation}
\left( \frac{\partial^{2^n}}{\partial t^{2^n}} + \frac{\partial^{2^n}}{\partial x^{2^n}} \right) u =0, \quad x \in \mathbb{R},\; t>0, \quad n >1. \label{eqWeq}
\end{equation}
A real-valued expression of \eqref{eqWuno} reads
\begin{align}
p_{2^n}(x,t) = & \frac{t (x^2 + t^2)}{\pi 2^{n-2} (x^{2^n}+ t^{2^n})} \sum_{\begin{subarray}{c} k= 1\\ k \textrm{ odd} \end{subarray}}^{2^{n-1}-1} \cos \frac{k\pi}{2^n} \prod_{\begin{subarray}{c} k \neq j = 1\\j  \textrm{ odd} \end{subarray}}^{2^{n-1}-1} \left( x^4 + t^4 + 2x^2 t^2 \cos \frac{j\pi}{2^{n-1}} \right) \label{eqWdue}
\end{align}
or equivalently
\begin{equation}
p_{2^n}(x,t) = \frac{t(x^2 + t^2)}{2^{n-2} \pi} \sum_{\begin{subarray}{c} k=1\\ \textrm{k odd} \end{subarray}}^{2^{n-1}-1} \frac{\cos \frac{k\pi}{2^{n}}}{x^4+t^4 + 2x^2t^2\cos \frac{k\pi}{2^{n-1}}}, \quad \textrm{for }\; n >1. \label{eqWtre}
\end{equation}
\end{te}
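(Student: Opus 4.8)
The plan is to split the statement into two logically independent parts: first that the complex expression \eqref{eqWuno} is annihilated by the operator in \eqref{eqWeq}, and second that the three displayed expressions \eqref{eqWuno}, \eqref{eqWdue}, \eqref{eqWtre} coincide. The equation is dispatched through the decoupling of the operator already recorded in the introduction, while the passage between the three forms is a matter of conjugate pairing together with a polynomial factorization of $x^{2^n}+t^{2^n}$.

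For the equation I would write each summand as $u_k(x,t)=t\,e^{i\pi k/2^n}\big/\big(x^2+(t\,e^{i\pi k/2^n})^2\big)$ and introduce the complex change of time $\tau = t\,e^{i\pi k/2^n}$, under which $u_k = \tau/(x^2+\tau^2)$ is the standard Cauchy kernel, harmonic by the very first display of the introduction, so $(\partial_\tau^2+\partial_x^2)u_k=0$. Since $\partial_t^2 = e^{i\pi k/2^{n-1}}\partial_\tau^2$, multiplying through shows that $u_k$ is killed by exactly the $k$-th factor of the decoupled operator
\[
\frac{\partial^{2^n}}{\partial t^{2^n}}+\frac{\partial^{2^n}}{\partial x^{2^n}}=\prod_{\substack{k=-(2^{n-1}-1)\\ k\ \mathrm{odd}}}^{2^{n-1}-1}\Big(\frac{\partial^2}{\partial t^2}+e^{i\pi k/2^{n-1}}\frac{\partial^2}{\partial x^2}\Big).
\]
As these factors are constant-coefficient operators they commute, so the full product annihilates every $u_k$; by linearity it annihilates the sum \eqref{eqWuno}, which is \eqref{eqWeq}.

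To identify the three forms I would first pair the terms $k$ and $-k$ in \eqref{eqWuno}. Writing $a=e^{i\pi k/2^n}$ and using $|a|=1$, a short computation gives
\[
\frac{ta}{x^2+t^2 a^2}+\frac{t\bar a}{x^2+t^2\bar a^2}=\frac{2t\cos\frac{k\pi}{2^n}\,(x^2+t^2)}{x^4+t^4+2x^2t^2\cos\frac{k\pi}{2^{n-1}}}.
\]
The odd indices $|k|\le 2^{n-1}-1$ form the symmetric set $\{\pm1,\pm3,\dots,\pm(2^{n-1}-1)\}$, so every term belongs to exactly one pair $\{k,-k\}$ and none is left over; summing over $k=1,3,\dots,2^{n-1}-1$ turns the prefactor $1/(\pi 2^{n-1})$ into $1/(\pi 2^{n-2})$ and yields \eqref{eqWtre}. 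Finally, applying the same conjugate pairing to the elementary factorization $t^{2^n}+x^{2^n}=\prod_{k\ \mathrm{odd},\,|k|\le 2^{n-1}-1}\big(t^2+e^{i\pi k/2^{n-1}}x^2\big)$ produces
\[
x^{2^n}+t^{2^n}=\prod_{\substack{j=1\\ j\ \mathrm{odd}}}^{2^{n-1}-1}\Big(x^4+t^4+2x^2t^2\cos\frac{j\pi}{2^{n-1}}\Big),
\]
so bringing \eqref{eqWtre} to the common denominator $x^{2^n}+t^{2^n}$ replaces the $k$-th denominator by the product over $j\neq k$, which is precisely \eqref{eqWdue}.

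I expect the only delicate point to be bookkeeping rather than depth. One must check that the index set of odd $k$ is symmetric so that every term pairs, that the conjugate pairing produces the \emph{same} quartic blocks $x^4+t^4+2x^2t^2\cos(k\pi/2^{n-1})$ both in the density and in the factorization of $x^{2^n}+t^{2^n}$, and that the complex change of time $\tau=t\,e^{i\pi k/2^n}$ legitimately transports the harmonicity of the Cauchy kernel to the rescaled second-order equation. None of these steps is hard, but the matching of the cosines and of the combinatorial ranges is where an error would most easily creep in.
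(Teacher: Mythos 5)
Your proposal is correct, and the algebraic half (conjugate pairing of the indices $k$ and $-k$, plus the factorization $x^{2^n}+t^{2^n}=\prod_{j\ \mathrm{odd}}\left(x^4+t^4+2x^2t^2\cos\frac{j\pi}{2^{n-1}}\right)$) is essentially what the paper does, only traversed in the opposite order: the paper goes from \eqref{eqWuno} to \eqref{eqWdue} and then to \eqref{eqWtre}, while you go from \eqref{eqWuno} to \eqref{eqWtre} and recover \eqref{eqWdue} by clearing denominators. The genuine difference is in the PDE verification. The paper takes Fourier transforms in $x$, identifies the transform of each summand as $e^{-t|\beta|e^{ik\pi/2^n}}$, and checks the resulting ODE $\partial_t^{2^n}U+\beta^{2^n}U=0$ term by term using $(e^{ik\pi/2^n})^{2^n}=(-1)^k=-1$; you instead use the operator decoupling $\partial_t^{2^n}+\partial_x^{2^n}=\prod_k\left(\partial_t^2+e^{ik\pi/2^{n-1}}\partial_x^2\right)$ together with the complex time change $\tau=te^{ik\pi/2^n}$, under which the $k$-th factor becomes $e^{ik\pi/2^{n-1}}\left(\partial_\tau^2+\partial_x^2\right)$ and kills the $k$-th summand by harmonicity of the Cauchy kernel (a rational-function identity, so valid for complex $\tau$), commutativity of the constant-coefficient factors doing the rest. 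Your route is more elementary and sidesteps a point the paper leaves implicit, namely that the inverse Fourier transform of $e^{-t|\beta|e^{ik\pi/2^n}}$ really is the ``Cauchy density at complex time'' (which requires $\cos\frac{k\pi}{2^n}>0$, i.e.\ precisely the restriction $|k|\le 2^{n-1}-1$). What the paper's Fourier computation buys in exchange is the explicit characteristic function $U(\beta,t)=\frac{1}{2^{n-1}}\sum_k e^{-t|\beta|e^{ik\pi/2^n}}$, which is reused in the subsequent remarks (e.g.\ for $p_4$ and $p_8$) and which also motivates the selection of exactly those $2^{n-1}$ roots that give bounded solutions with $U(\beta,0)=1$.
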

\begin{proof}
In order to check that \eqref{eqWuno} satisfies equation \eqref{eqWeq} we resort to Fourier transforms
$$U(\beta, t) = \int_{-\infty}^{+\infty} e^{i \beta x} u(x, t) dx.$$
Equation \eqref{eqWeq} becomes
\begin{equation}
\frac{\partial^{2^n}U}{\partial t^{2^n}} + (-i \beta)^{2^n} U = \frac{\partial^{2^n}U}{\partial t^{2^n}} + \beta^{2^n} U = 0. \label{furEq}
\end{equation}
The solutions of the algebraic equation associated to \eqref{furEq} have the form 
\begin{equation}
r_j = |\beta | \, e^{i \pi \frac{2j+1}{2^n}}, \quad 0 \leq j \leq 2^{n} - 1.
\end{equation}
In order to construct bounded solutions to \eqref{furEq} we restrict ourselves to
\begin{equation}
U(\beta, t) = \frac{1}{2^{n-1}} \sum_{\begin{subarray}{c} k=-(2^{n-1} - 1)\\ k \; odd \end{subarray}}^{2^{n-1}-1} e^{- t |\beta | e^{i \frac{k\pi}{2^n}}} \label{eqWeqFur}
\end{equation}
where the normalizing constant in \eqref{eqWeqFur} is chosen equal to $1/2^{n-1}$ so that $U(\beta , 0) = 1$. The inverse of \eqref{eqWeqFur} is \eqref{eqWuno}. We check directly that each term of \eqref{eqWuno} has Fourier transform solving equation \eqref{furEq}. For all odd values of $k$, we have that 
\begin{align*}
& \int_{-\infty}^{+\infty} e^{i\beta x} \left( \frac{\partial^{2^n}}{\partial t^{2^n}} + \frac{\partial^{2^n}}{\partial x^{2^n}} \right) \frac{1}{\pi} \left( \frac{t e^{i \frac{k\pi}{2^n}}}{x^2 + (t e^{i \frac{k\pi}{2^n}})^2} \right) dx\\
= & \frac{\partial^{2^n}}{\partial t^{2^n}} e^{-t |\beta | e^{i \frac{k\pi}{2^n}} } + (-i\beta)^{2^n} e^{-t |\beta | e^{i \frac{k\pi}{2^n}}}\\
= & \left( \beta^{2^n} e^{ik\pi} + i^{2^n} \beta^{2^n} \right) e^{-t |\beta | e^{i \frac{k\pi}{2^n}} }\\
= & \left( (-1)^k \beta^{2^n} + \beta^{2^n} \right) e^{-t |\beta | e^{i \frac{k\pi}{2^n}} } = 0
\end{align*}
because $k$ is odd. In order to obtain \eqref{eqWdue} we observe that, in view of \eqref{eqWuno} we can write
\begin{align*}
p_{2^n}(x, t) = & \frac{1}{\pi} \sum_{\begin{subarray}{c} k= -(2^{n-1}-1)\\ k \textrm{ odd} \end{subarray}}^{2^{n-1}-1} \frac{c_k\, t^{|2k-1|} x^{2^n - |2k-1|-1}}{ \prod_{\begin{subarray}{c} k=-(2^{n-1}-1)\\k \textrm{ odd} \end{subarray}}^{2^{n-1}-1} (x^2 + (t e^{i \frac{k\pi}{2^n}})^2)}
\end{align*}
where
\begin{align}
 \prod_{\begin{subarray}{c} k=-(2^{n-1}-1)\\k \textrm{ odd} \end{subarray}}^{2^{n-1}-1} (x^2 + (t e^{i \frac{k\pi}{2^n}})^2) = x^{2^n} + t^{2^n} \label{eqWsei}
\end{align}
and $c_k$ are constants evaluated below. Result \eqref{eqWsei} can be obtained directly by solving the equation $x^{2^n} + t^{2^n}=0$ or by successively regrouping the terms of the right-hand side of \eqref{eqWsei}. We have at first that
\begin{align}
\prod_{\begin{subarray}{c} k=-(2^{n-1}-1)\\ k \textrm{ odd} \end{subarray}}^{2^{n-1}-1} (x^2 + (t e^{i \frac{k\pi}{2^n}})^2) = & \prod_{k=1, \, k \textrm{ odd}}^{2^{n-1}-1} \left(x^4 + t^4 + 2x^2 t^2 \cos \frac{k\pi}{2^{n-1}}\right)\nonumber \\
= & \prod_{k=1, \, k \textrm{ odd}}^{2^{n-2}-1} \left(x^8 + t^8 + 2 x^4 t^4 \cos \frac{k\pi}{2^{n-2}}\right)\nonumber \\
= & \cdots \nonumber \\
= & \left(x^{2^n} + t^{2^n} + 2 x^2 t^2 \cos \frac{\pi}{2}\right)\nonumber \\
= & x^{2^n} + t^{2^n}. \nonumber 
\end{align}
In view of \eqref{eqWsei} we can rewrite \eqref{eqWuno} as
\begin{align*}
p_{2^n}(x, t) = & \frac{t}{\pi 2^{n-1}(x^{2^n} + t^{2^n})} \sum_{\begin{subarray}{c} k=-(2^{n-1}-1)\\ k \textrm{ odd} \end{subarray}}^{2^{n-1}-1} \prod_{\begin{subarray}{c} j =-(2^{n-1}-1)\\ j \textrm{ odd,}\, j \neq k \end{subarray}}^{2^{n-1}-1} (x^2 + (te^{i \frac{\pi j}{2^n}})^2) e^{i \frac{\pi k}{2^n}}
\end{align*}
where
\begin{align*}
& \sum_{\begin{subarray}{c} k=-(2^{n-1}-1)\\ k \textrm{ odd} \end{subarray}}^{2^{n-1}-1} \prod_{\begin{subarray}{c} j =-(2^{n-1}-1)\\ j \textrm{ odd,}\, j \neq k \end{subarray}}^{2^{n-1}-1} (x^2 + (te^{i \frac{\pi j}{2^n}})^2) e^{i \frac{\pi k}{2^n}}\\
= & \sum_{\begin{subarray}{c} k=-(2^{n-1}-1)\\ k \textrm{ odd} \end{subarray}}^{2^{n-1}-1} \prod_{\begin{subarray}{c} j =1\\ j \textrm{ odd,}\, j \neq k \end{subarray}}^{2^{n-1}-1} (x^4 + t^4 + 2x^2 t^2 \cos \frac{\pi j}{2^{n-1}})  (x^2 + (te^{-i \frac{\pi k}{2^n}})^2) e^{i \frac{\pi k}{2^n}}\\
= & \sum_{\begin{subarray}{c} k=1\\ k \textrm{ odd} \end{subarray}}^{2^{n-1}-1} \prod_{\begin{subarray}{c} j =1\\ j \textrm{ odd,}\, j \neq k \end{subarray}}^{2^{n-1}-1} (x^4 + t^4 + 2x^2 t^2 \cos \frac{\pi j}{2^{n-1}})(x^2 e^{i \frac{k\pi}{2^n}} + t^2 e^{-i \frac{k\pi}{2^n}}+ x^2 e^{-i \frac{k\pi}{2^n}} + t^2 e^{i \frac{k\pi}{2^n}})\\
= & 2 (x^2 + t^2) \sum_{\begin{subarray}{c} k=1\\ k \textrm{ odd} \end{subarray}}^{2^{n-1}-1} \cos \frac{k \pi}{2^n} \prod_{\begin{subarray}{c} j =1\\ j \textrm{ odd,}\, j \neq k \end{subarray}}^{2^{n-1}-1} (x^4 + t^4 + 2x^2 t^2 \cos \frac{\pi j}{2^{n-1}})
\end{align*}
and thus
\begin{align*}
p_{2^n}(x, t) = & \frac{t(x^2+t^2)}{\pi 2^{n-2}(x^{2^n} + t^{2^n})} \sum_{\begin{subarray}{c} k=1\\ k \textrm{ odd} \end{subarray}}^{2^{n-1}-1} \cos \frac{k \pi}{2^n} \prod_{\begin{subarray}{c} j =1\\ j \textrm{ odd,}\, j \neq k \end{subarray}}^{2^{n-1}-1} (x^4 + t^4 + 2x^2 t^2 \cos \frac{\pi j}{2^{n-1}}).
\end{align*}
Furthermore, from the fact that
\begin{equation*}
x^{2^n} + t^{2^n} = \prod_{k=1, \, k \textrm{ odd}}^{2^{n-2}-1} \left(x^8 + t^8 + 2 x^4 t^4 \cos \frac{k\pi}{2^{n-2}}\right)
\end{equation*}
we obtain that
\begin{equation*}
p_{2^n}(x, t) = \frac{t(x^2 + t^2)}{2^{n-2} \pi} \sum_{\begin{subarray}{c} k=1\\ k \textrm{ odd} \end{subarray}}^{2^{n-1}-1}  \frac{\cos  \frac{k\pi}{2^n}}{x^4 + t^4 + 2x^2 t^2 \cos \frac{k\pi}{2^{n-1}}}.
\end{equation*}
\end{proof}

\begin{os}
\normalfont
In order to prove that the density \eqref{eqWdue} integrates to unity we present the following calculation
\begin{align}
\int_{-\infty}^{+\infty} \frac{x^2 + t^2}{x^4+t^4 + 2x^2t^2\cos \frac{k\pi}{2^{n-1}}} dx = & 2 \int_{0}^{+\infty} \frac{x^2 + t^2}{x^4 + t^4 + 2x^2t^2\cos \frac{k\pi}{2^{n-1}}} dx \notag \\ 
= & \frac{2}{t}\int_{0}^{+\infty} \frac{y^2 +1}{y^4 + 1 + 2y\cos \frac{k\pi}{2^{n-1}}}dy \notag \\
= & \frac{2}{t} \int_0^{\frac{\pi}{2}} \frac{1}{\tan^4 \theta + 1 + 2 \tan^2 \theta \cos \frac{k\pi}{2^{n-1}}} \frac{d\theta}{\cos^4 \theta}\notag \\
= & \frac{2}{t} \int_0^{\frac{\pi}{2}} \frac{d\theta}{\sin^4 \theta + \cos^4 \theta + 2 \sin^2 \theta \cos^2\theta \cos \frac{k\pi}{2^{n-1}}}\notag \\
= & \frac{2}{t}\int_0^{\frac{\pi}{2}} \frac{d\theta}{1 - \frac{1-\cos\frac{k\pi}{2^{n-1}}}{2} \sin^2 2\theta}\notag  \\
= & \frac{2}{t} \int_0^{\frac{\pi}{2}} \frac{d\theta}{1- \frac{1}{2}\left(1- \cos \frac{k\pi}{2^{n-1}} \right)\left( \frac{1-\cos 4\theta}{2}\right)}\notag \\
= & \frac{1}{2t} \int_0^{2\pi} \frac{d\phi}{1- \frac{1- \cos \frac{k\pi}{2^{n-1}}}{4} + \frac{1}{4} \left( 1- \cos \frac{k\pi}{2^{n-1}} \right) \cos \phi}\notag \\
= & \frac{2}{t} \int_0^{2\pi} \frac{d\phi}{\left(3+ \cos \frac{k\pi}{2^{n-1}}\right) + \left( 1-\cos\frac{k\pi}{2^{n-1}}\right) \cos \phi}\notag \\
= & \frac{2}{t} \frac{2\pi}{\sqrt{\left( 3 + \cos \frac{k\pi}{2^{n-1}} \right)^2 - \left( 1 -\cos\frac{k\pi}{2^{n-1}}\right)^2}} \notag \\
= & \frac{\pi \sqrt{2}}{t} \frac{1}{\sqrt{1 + \cos \frac{k\pi}{2^{n-1}}}}\notag \\
= & \frac{\pi}{t} \frac{1}{\cos \frac{k\pi}{2^n}} \label{unity}.
\end{align} 
From \eqref{eqWtre}, in view of \eqref{unity}, we can conclude that
$$\int_{-\infty}^{+\infty} p_{2^n}(x, t)\, dx = 1$$
\end{os}

\begin{os}
\normalfont
From \eqref{eqWuno}, for $n=2$ we obtain that
$$ p_4(x,t)=\frac{1}{2\pi}\left[ \frac{t e^{i\frac{\pi}{4}}}{x^2 + (t e^{i\frac{\pi}{4}})^2} +  \frac{t e^{-i\frac{\pi}{4}}}{x^2 + (t e^{-i\frac{\pi}{4}})^2} \right] $$ 
with Fourier transform
$$\int_{-\infty}^{+\infty} e^{i \beta x} p_4(x, t)dx = e^{-\frac{t}{\sqrt{2}}|\beta |} \cos \frac{\beta t}{\sqrt{2}}.$$
From \eqref{eqWdue} and \eqref{eqWtre} we have that
\begin{equation}
p_4(x, t) = \frac{t}{\sqrt{2}\pi} \frac{x^2+t^2}{x^4 + t^4}. \label{p4law}
\end{equation}
The law \eqref{p4law} has two maxima as Figure \ref{fig3} shows.
\end{os}

\begin{os}
\normalfont
For $n=3$, from \eqref{eqWdue}, we have that
$$p_8(x, t) =  \frac{t (x^2 + t^2)}{2 \pi (x^8 + t^8)} \Bigg[ \left(x^4 +t^4 +2x^2t^2\cos\frac{\pi}{4} \right) \cos \frac{3\pi}{8} + \left(x^4 + t^4 +2x^2t^2\cos\frac{3\pi}{4} \right) \cos \frac{\pi}{8} \Bigg].$$
From the fact that
$$\cos\frac{3\pi}{4} = -\cos \frac{\pi}{4} \quad \textrm{and} \quad \cos \frac{3\pi}{8} = \sin \frac{\pi}{8}$$
we write
\begin{align*}
p_8(x,t) = & \frac{t (x^2 + t^2)}{2 \pi (x^8 + t^8)} \Bigg[ & \left(x^4 +t^4 +\sqrt{2}x^2t^2 \right) \sin \frac{\pi}{8} + \left(x^4 + t^4 -\sqrt{2}x^2t^2 \right) \cos \frac{\pi}{8} \Bigg].
\end{align*}
From \eqref{eqWtre} we have also that
\begin{align}
p_8(x,t) = & \frac{t}{2\pi}\left[ \frac{x^2+t^2}{x^4+t^4-\sqrt{2}x^2t^2}\sin \frac{\pi}{8} +  \frac{x^2+t^2}{x^4+t^4 + \sqrt{2}x^2t^2}\cos \frac{\pi}{8} \right] . \label{p8law}
\end{align}
From \eqref{eqWuno} we obtain the characteristic function
\begin{equation*}
\int_{\mathbb{R}} e^{i \beta x} p_{8}(x, t)dx = \frac{1}{2^2} \left[ e^{-t |\beta | \cos\frac{\pi}{8}} \cos \left( t \beta \sin \frac{\pi}{8} \right) + e^{-t |\beta | \sin \frac{\pi}{8}} \cos \left( t \beta \cos \frac{\pi}{8} \right) \right].
\end{equation*}
The density $p_8(x, t)$ is a bimodal curve as well as $p_4(x, t)$. The maxima of $p_8(x, t)$ are heigher than those of $p_4(x,t)$ as Figure \ref{fig3} shows. Also $p_{2^n}(x, t)$ displays a bimodal structure with   the height peaks increasing as $n$ increases. The form of $p_{2^n}(x,t)$ reminds the structure of densities of fractional diffusions governed by equations
$$\frac{\partial^\nu u}{\partial t^\nu} = \lambda^2 \frac{\partial^2 u}{\partial x^2}$$ 
for $1 < \nu < 2$ (see \cite{OB09}).
\end{os}

\begin{figure}
\caption{The profile of the functions $p_4$ (dotted line), formula \eqref{p4law} and $p_8$, formula \eqref{p8law}.}
\includegraphics[scale=.5]{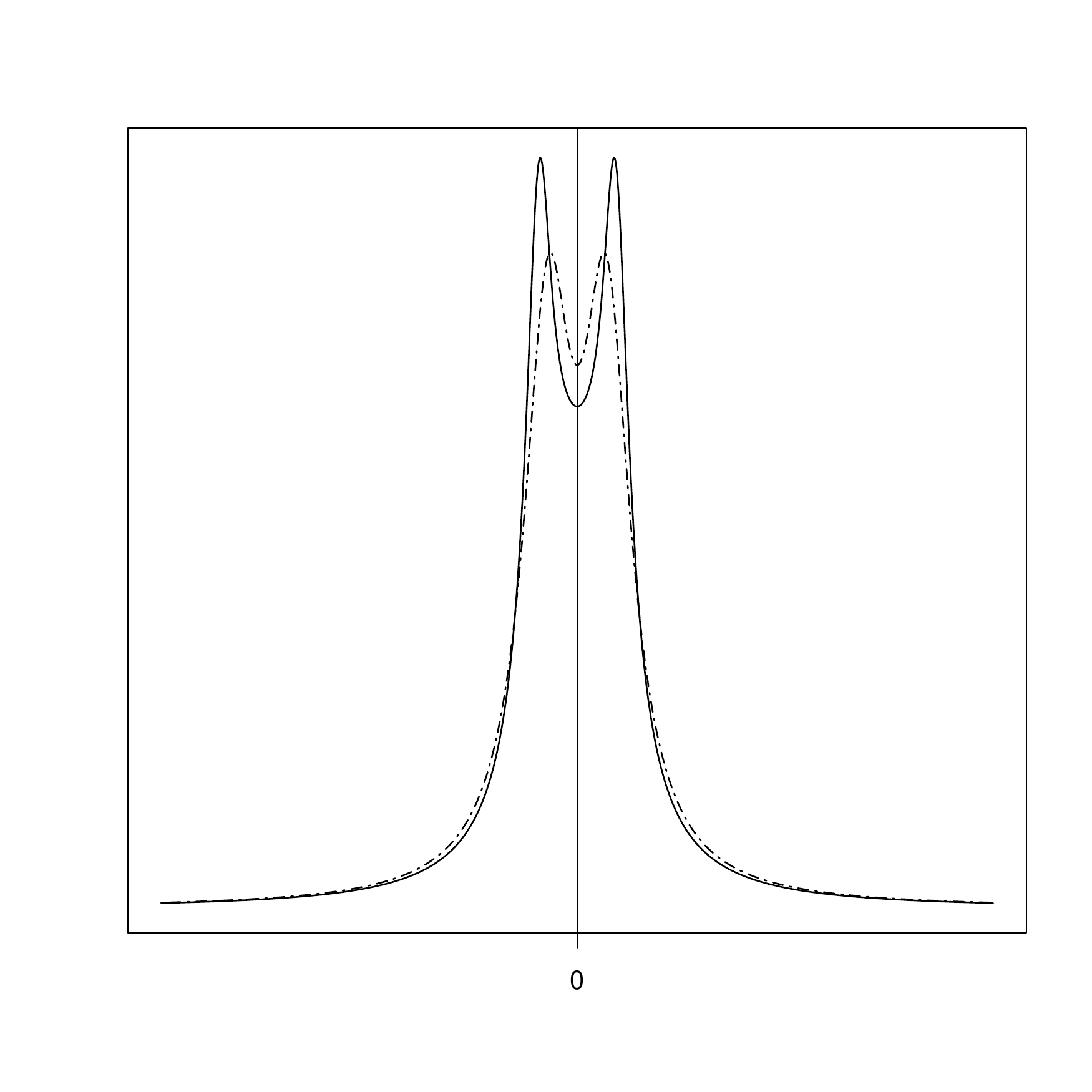} 
\label{fig3}
\end{figure}

\begin{os}
\normalfont
The result \eqref{eqWtre} can conveniently be rewritten as
\begin{equation}
p_{2^n}(x, t) = \frac{t}{\pi(x^2 + t^2)} \left[ \frac{1}{2^{n-1}} \sum_{\begin{subarray}{c} k =1\\ k \textrm{ odd} \end{subarray}}^{2^{n-1}-1} \frac{x^4 + t^4 + 2x^2t^2}{x^4 + t^4 + 2x^2t^2\cos \frac{k\pi}{2^{n-1}}} \cos\frac{k\pi}{2^n} \right]. \label{212eq}
\end{equation}
The factor in square parenthesis measures, in some sense, the disturbance of $p_{2^n}$ on the classical Cauchy. For $n=2$, we have in particular that 
\begin{equation}
p_{2^2}(x, t) = \frac{t}{\pi(x^2 + t^2)}\frac{1}{\sqrt{2}} \left[ 1 + \frac{2x^2t^2}{x^4 + t^4} \right] = \frac{t}{\sqrt{2} \pi} \frac{x^2+t^2}{x^4 + t^4}. \label{densWsette}
\end{equation}
The density \eqref{densWsette} has two symmetric maxima at $x=\pm t \sqrt{\sqrt{2}-1}$ and a minimum at $x=0$ (see Fig. 6 of \citet{DO3}). The terms
\begin{equation}
g_k(x, t) = \frac{x^4 + t^4 + 2x^2 t^2}{x^4 + t^4 + 2x^2 t^2 \cos \frac{k\pi}{2^{n-1}}}
\end{equation}
display two maxima at $x = \pm t$ with height depending on $k$ and whose profile is depicted in Figure \ref{fig1}.
\end{os}

\begin{figure}
  \caption{The profile of the function $g_k$ for $n=3$ and $k=1$ (dotted line), $k=3$.}
    \includegraphics[width=0.6\textwidth]{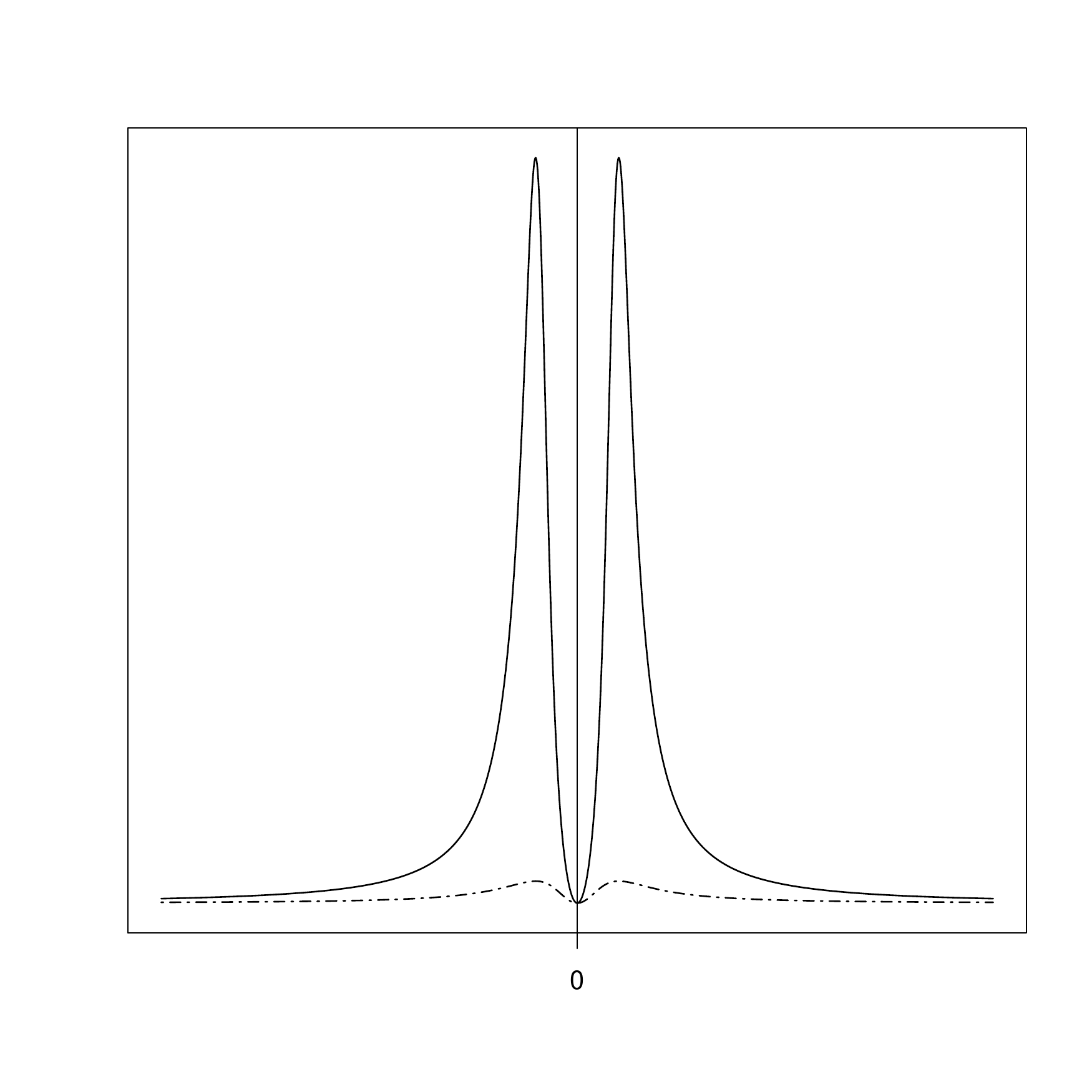}
    \label{fig1}
\end{figure}

\begin{os}
\normalfont
The density $p_{2^n}(x, t)$ can be written as
\begin{equation}
p_{2^n}(x,t) = \frac{t (x^2 + t^2)}{2^{n-2}\pi (x^{2^n} + t^{2^n})} Q(x, t)\label{3sette}
\end{equation}
where $Q(x, t)$ is a polynomial of order $2^n - 2^2$. For $n=2$ the function $Q(x, t)$ reduces to $\cos \frac{\pi}{4}$. For $n=3$, 
\begin{align*}
Q(x, t) = & (x^4 + t^4 + \sqrt{2}x^2t^2) \sin \frac{\pi}{8} + (x^4 + t^4 - \sqrt{2} x^2 t^2)\cos \frac{\pi}{8}.
\end{align*}
The expression \eqref{3sette} shows that the probability law $p_{2^n}(x, t)$, $x \in \mathbb{R}$, $t>0$ shares with the classical Cauchy density the property of non-existence of the mean value.
\end{os}

\begin{figure}
\caption{The profile of the functions $p_{2^n}$, formula \eqref{eqWtre}, for $n=5,10,15,20$.}
\includegraphics[scale=.6]{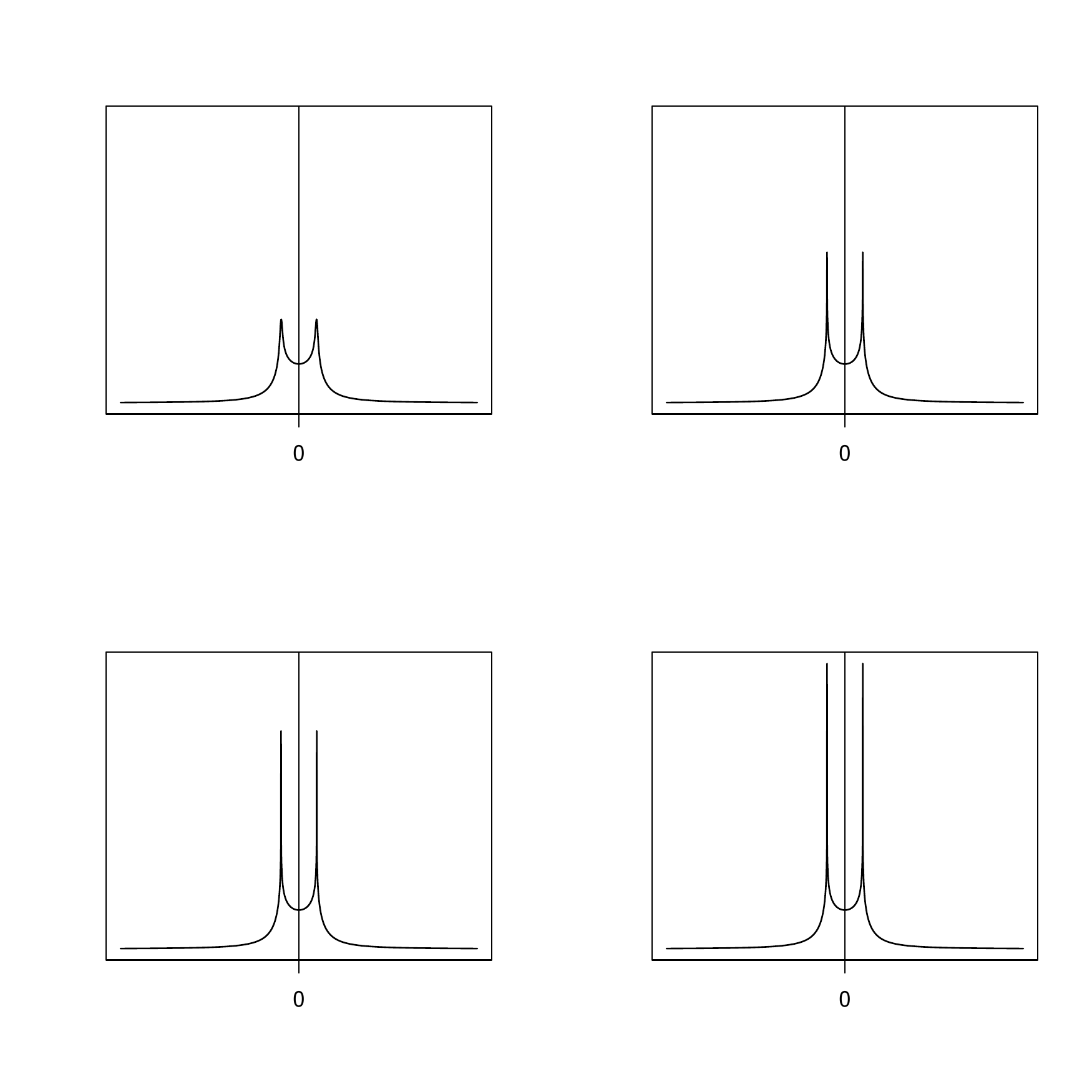} 
\label{fig4}
\end{figure}

\begin{os}
\normalfont
The density of the hyper-Cauchy can also be presented in an alternative form by regrouping the terms in the right-hand side of \eqref{eqWtre} as
\begin{align}
& \sum_{\begin{subarray}{c} k=1\\ \textrm{k odd} \end{subarray}}^{2^{n-2}-1} \left[ \frac{\sin \frac{k\pi}{2^n}}{x^4 + t^4 - 2 x^2 t^2 \cos \frac{k\pi}{2^{n-1}}} + \frac{\cos \frac{k\pi}{2^n}}{x^4+t^4 + 2x^2t^2 \cos \frac{k\pi}{2^{n-1}}} \right]\label{eqsum216}\\
= & \sum_{\begin{subarray}{c} k=1\\ \textrm{k odd} \end{subarray}}^{2^{n-2}-1} \frac{\left(x^4 +t^4 + 2x^2t^2\cos\frac{k\pi}{2^{n-1}}\right)\sin \frac{k \pi}{2^n} + \left( x^4 + t^4 - 2x^2 t^2 \cos \frac{k \pi}{2^{n-1}} \right) \cos \frac{k\pi}{2^n}}{x^8 + t^8 - 2 x^4 t^4 \cos \frac{k\pi}{2^{n-2}}}.\notag
\end{align}
For $ n=3$, from \eqref{eqsum216}, we get again that
$$p_{8}(x,t) = \frac{t(x^2 + t^2 )}{2\pi (x^8 + t^8 )}\left[ \left(x^4 +t^4 + \sqrt{2}x^2t^2\right)\sin \frac{k \pi}{8} + \left( x^4 + t^4 - \sqrt{2}x^2 t^2  \right) \cos \frac{k\pi}{8} \right].$$
\end{os}

\begin{os}
\normalfont
The r.v. 
\begin{equation}
W(t) = \bigg| C\left(t \cos \frac{\pi k}{2^{n}}\right)  - t \sin \frac{\pi k}{2^{n}} \bigg| \label{symC}
\end{equation}
(where $C(t)$, $t>0$ is the Cauchy process) has probability density 
\begin{equation}
f_k(w, t) = \frac{2t(w^2 + t^2) \cos \frac{k\pi}{2^n}}{\pi (w^4 + t^4 + 2w^2t^2 \cos \frac{k\pi}{2^{n-1}})}, \quad w>0. \label{eq218}
\end{equation}
Indeed, we have that
\begin{align}
Pr\left\lbrace  W(t) < w \right\rbrace = & \int_{-w + t \sin \frac{k\pi}{2^n}}^{+w + t \sin \frac{k\pi}{2^n}} dy \frac{t \cos \frac{k\pi}{2^n}}{\pi (y^2 + t^2 \cos^2 \frac{k\pi}{2^n})} \label{lawCtras}
\end{align}
and
\begin{align*}
f_k(w, t) = & \frac{d}{dw}Pr\left\lbrace  \bigg| C\left(t \cos \frac{\pi k}{2^{n}}\right)  - t \sin \frac{\pi k}{2^{n}} \bigg| < w \right\rbrace\\
= & \frac{t \cos \frac{k\pi}{2^n}}{\pi \bigg((w + t \sin \frac{k\pi}{2^n})^2 + t^2 \cos^2 \frac{k\pi}{2^n}\bigg)} + \frac{t \cos \frac{k\pi}{2^n}}{\pi \bigg((-w + t \sin \frac{k\pi}{2^n})^2 + t^2 \cos^2 \frac{k\pi}{2^n}\bigg)}\\
= & \frac{t \cos \frac{k\pi}{2^n}}{\pi \bigg(w^2 + 2 wt \sin \frac{k\pi}{2^n}+ t^2 \bigg)} + \frac{t \cos \frac{k\pi}{2^n}}{\pi \bigg(w^2 - 2 wt \sin \frac{k\pi}{2^n}+ t^2 \bigg)}\\
= & \frac{2t (w^2 + t^2) \cos \frac{k\pi}{2^n}}{\pi (w^2 + t^2 + 2wt\sin \frac{k\pi}{2^{n}}) (w^2 + t^2 - 2wt\sin \frac{k\pi}{2^{n}})}\\
= & \frac{2t(w^2 + t^2) \cos \frac{k\pi}{2^n}}{\pi (w^4 + t^4 + 2w^2t^2 \cos \frac{k\pi}{2^{n-1}})}
\end{align*}
because
$$2\sin^2 \frac{k\pi}{2^n} = 1 - \cos \frac{k\pi}{2^{n-1}}.$$
By symmetrizing \eqref{symC} as follows
$$Z(t) = \frac{W_1(t) - W_2(t)}{2}$$
where $W_1(t), W_2(t)$ are independent copies of $W(t)$ we obtain a distribution of the form
\begin{equation}
h_k(w, t) = \frac{t (w^2 + t^2) \cos \frac{k\pi}{2^n}}{\pi (w^4 + t^4 + 2w^2t^2 \cos \frac{k\pi}{2^{n-1}})}, \quad w\in \mathbb{R} \label{eq220}
\end{equation}
which coincides with each term of \eqref{212eq}. This construction explains the reason for which each term in \eqref{212eq} has two symmetric maxima at $w=\pm t \sqrt{2\sin \frac{k\pi}{2^n} - 1}$ for $k :\, \sin \frac{\pi k}{2^n} > \frac{1}{2}$.
\end{os}

\begin{figure}
\caption{The figure shows how the distribution \eqref{eq220} can be constructed from the Cauchy density by folding and symmetrizing, in the cases $n=3$, $k=1$ (top figures) and $k=3$ (bottom figures). The dotted line gives the density of the folded distribution \eqref{eq218}.}
\includegraphics[scale=.5]{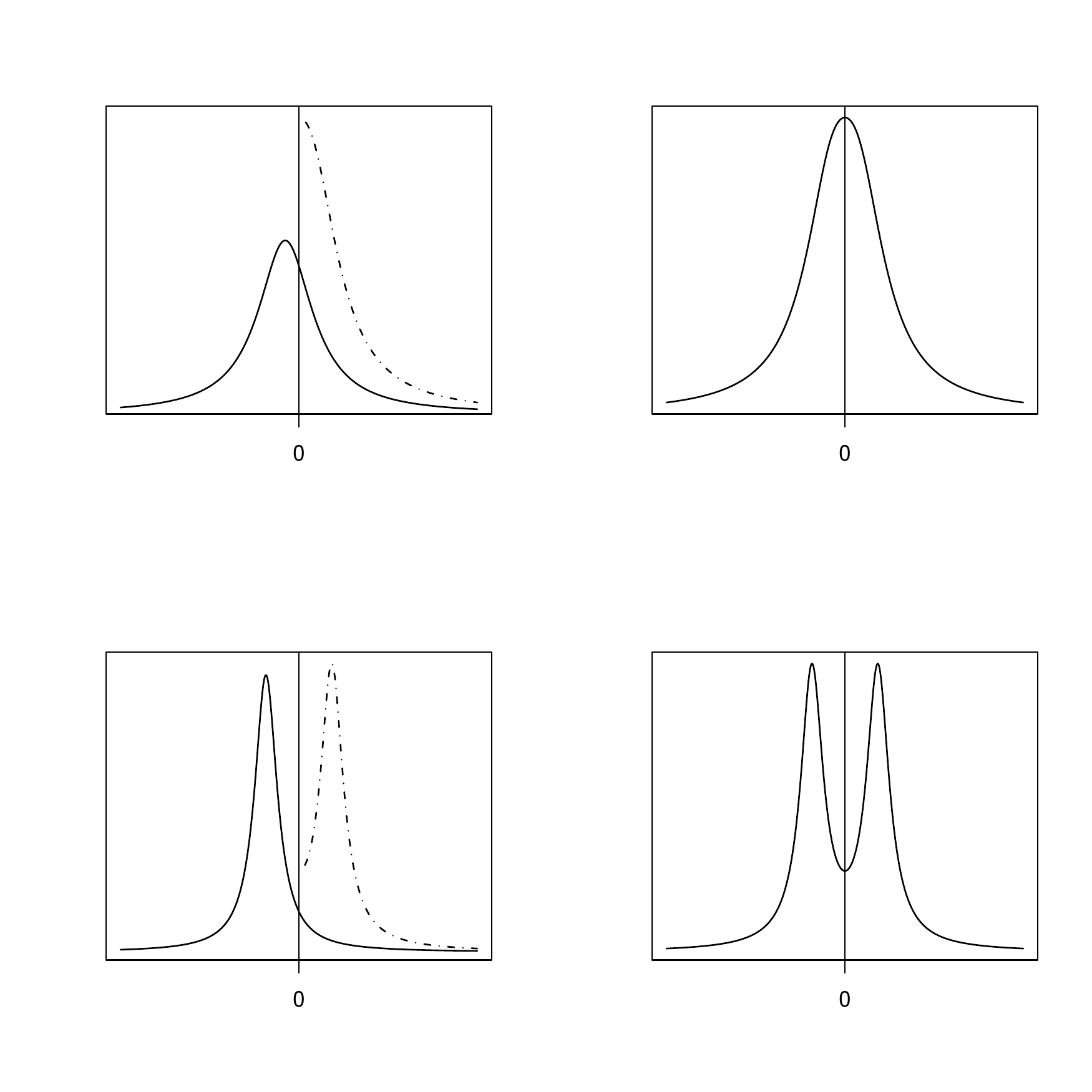} 
\end{figure}

\section{Higher-order Laplace-type equation}
Let us consider the pseudo-processes related to higher-order heat-type equations 
\begin{equation}
\frac{\partial u}{\partial t} = c_n \frac{\partial^n u}{\partial x^n}, \quad x \in \mathbb{R},\; t>0, \quad n >2. \label{HigLapEq}
\end{equation}
where $c_n=(-1)^{\frac{n}{2}+1}$ for $n$ even and $c_n=\pm 1$ for $n$ odd. 

Pseudo-processes constructed by exploiting the sign-varying measures obtained as fundamental solutions to \eqref{HigLapEq} have been examined in many papers since the beginning of the Sixties. A description of the procedure of construction of pseudo-processes can be found, for example in \citet{kry60}, \citet{Lad63}, \citet{Hoc78}, \citet{Ors91}, \citet{LCH03}. In the case where $n=2k+1$, $c_{2k+1}=-1$, the fundamental solution to \eqref{HigLapEq} reads
\begin{equation}
u_{2k+1}(x, t) = \frac{1}{2\pi} \int_{-\infty}^{+\infty} e^{-i \beta x + i (-1)^k t \beta ^{2k+1}}d\beta.
\end{equation}
In particular, for $k=1$
\begin{equation}
u_3(x, t) = \frac{1}{\pi} \int_0^\infty \cos\left( \beta x + \beta^3 t \right) d\beta = \frac{1}{\sqrt[3]{3t}}Ai\left( \frac{x}{\sqrt[3]{3t}} \right)
\end{equation}
where
\begin{equation*}
Ai(x) = \frac{\sqrt{x}}{3}\left[ I_{-\frac{1}{3}}\left( \frac{2}{3} x^{3/2} \right) - I_{\frac{1}{3}}\left( \frac{2}{3}x^{3/2}\right) \right]
\end{equation*}
is the Airy function (see for example \citet{LE}). 

In this section we study the composition of pseudo-processes with stable processes $S_\alpha(t)$, $t>0$, $\alpha \in (0,1)$ whose characteristic function reads
\begin{align}
\mathbb{E}e^{i\beta S_\alpha(t)} = \exp\left(-t|\beta |^{\alpha} e^{- i \frac{\pi \gamma}{2} \frac{\beta}{|\beta |}} \right) = \exp\left( - \sigma \, t |\beta |^{\alpha} \left( 1 -  i \theta \frac{\beta}{|\beta |} \tan\frac{\pi \alpha}{2} \right) \right)
\label{charstabledens}
\end{align}
where $\sigma=\cos \pi \gamma /2>0$ and 
\begin{equation*}
\theta = \cot\left( \frac{\pi \alpha}{2} \right) \tan\left( \frac{\pi \gamma}{2} \right).
\end{equation*}
The parameter $\gamma$ must be chosen in such a way that $\theta \in [-1,1]$ for $\alpha \in (0,1)$. The skewness parameter $\theta=1$ (that is $\gamma=\alpha$) corresponds to positively skewed stable distributions. For the density
\begin{equation*}
p_{\alpha}(x, \gamma, t) = \frac{1}{2\pi} \int_{-\infty}^{+\infty} e^{- i \beta x} \mathbb{E} e^{i \beta S_\alpha(t)} d\beta
\end{equation*}
we have the scaling property
\begin{equation}
p_{\alpha}(x, \gamma, t) = \frac{1}{t^{1/\alpha}} p_{\alpha}\left( \frac{x}{t^{1/\alpha}}, \gamma, 1\right). \label{selfsimilar}
\end{equation}
For $\alpha \in (0,1)$, we have the series representation of stable density (see \cite[page 245]{OB09})
\begin{equation}
p_{\alpha}(x;\gamma, 1) = \frac{\alpha}{\pi} \sum_{r=0}^{\infty} (-1)^r \frac{\Gamma(\alpha(r+1))}{r!} x^{-\alpha(r+1)-1} \sin\left( \frac{\pi}{2}(\gamma + \alpha)(r+1) \right) . \label{serieslawS}
\end{equation}
\begin{te}
The composition of the pseudo-process $X_{2k+1}(t)$, $t>0$ with the stable process $S_{\frac{1}{2k+1}}(t)$, $t>0$, $k \in \mathbb{N}$, has a Cauchy probability distribution which can be written as
\begin{equation}
Pr\{ X_{2k+1}(S_{\frac{1}{2k+1}}(t) \in dx \}/dx =\frac{t\, \cos \frac{\pi}{2(2k+1)}}{\pi \left[ \left( x + (-1)^{k+1}  t\, \sin \frac{\pi}{2(2k+1)} \right)^2 + t^2\, \cos^2 \frac{\pi}{2(2k+1)} \right]}
\label{densCauchAsym}
\end{equation}
with $x \in \mathbb{R},\; t>0$. The density function \eqref{densCauchAsym} is a solution to the higher-order Laplace equation
\begin{equation}
\frac{\partial^{2k+1} u}{\partial t^{2k+1}} + \frac{\partial^{2k+1} u}{\partial x^{2k+1}}=0, \quad x \in \mathbb{R},\; t>0\label{h-oEQ}
\end{equation}
\end{te}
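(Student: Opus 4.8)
The plan is to argue entirely through Fourier transforms, exactly as in the Fourier-transform verification used for the hyper-Cauchy density. First I would record the Fourier symbol of the pseudo-process, then feed it into the characteristic function of the stable subordinator to obtain the symbol of the composed process, invert that symbol to recognize a Cauchy law, and finally check on the Fourier side that this symbol satisfies the transformed Laplace equation.

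First I would write down the characteristic function of $X_{2k+1}(t)$. Since the signed density $u_{2k+1}(x,t)=\frac{1}{2\pi}\int e^{-i\beta x + i(-1)^{k} t \beta^{2k+1}}\,d\beta$ is the fundamental solution of \eqref{HigLapEq} with $n=2k+1$, $c_{2k+1}=-1$, its Fourier transform is
$$\int_{-\infty}^{+\infty} e^{i\beta x}\, u_{2k+1}(x,t)\,dx = e^{\,i(-1)^{k} t \beta^{2k+1}}.$$
Conditioning on the value of the positively skewed stable subordinator $S_{\frac{1}{2k+1}}(t)$ (which lives on $(0,\infty)$ for $\alpha\in(0,1)$, $\theta=1$) and applying Fubini I would write
$$\int_{-\infty}^{+\infty} e^{i\beta x}\, Pr\{X_{2k+1}(S_{\frac{1}{2k+1}}(t))\in dx\} = \mathbb{E}\!\left[ e^{\,i(-1)^{k}\beta^{2k+1} S_{\frac{1}{2k+1}}(t)} \right],$$
that is, the characteristic function of the stable process evaluated at $\xi=(-1)^{k}\beta^{2k+1}$. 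Fubini is legitimate here because the inner $x$-integral produces the bounded symbol $e^{\,i(-1)^{k}s\beta^{2k+1}}$ of modulus one, integrated against the finite stable law.

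Next I would substitute $\xi=(-1)^{k}\beta^{2k+1}$ into \eqref{charstabledens} with $\alpha=\gamma=\frac{1}{2k+1}$. The arithmetic is routine: since $2k+1$ is odd, $|\xi|^{\alpha}=|\beta|$ and $\xi/|\xi|=(-1)^{k}\,\mathrm{sgn}(\beta)$, so the symbol becomes
$$U(\beta,t)=\exp\!\left(-t|\beta|\,e^{-i\frac{\pi}{2(2k+1)}(-1)^{k}\mathrm{sgn}(\beta)}\right) = e^{-t|\beta|\cos\frac{\pi}{2(2k+1)}}\; e^{\,i(-1)^{k}t\beta\sin\frac{\pi}{2(2k+1)}}.$$
This is exactly the Cauchy characteristic function $e^{\,i\mu\beta-\sigma|\beta|}$ with scale $\sigma=t\cos\frac{\pi}{2(2k+1)}$ and location $\mu=(-1)^{k}t\sin\frac{\pi}{2(2k+1)}$, whose inversion yields \eqref{densCauchAsym} after writing $x-\mu = x+(-1)^{k+1}t\sin\frac{\pi}{2(2k+1)}$; this simultaneously makes positivity and normalization transparent, confirming that the composition is a genuine probability law.

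For the second assertion I would verify the p.d.e.\ \eqref{h-oEQ} on the Fourier side, avoiding any direct differentiation of the Cauchy density. Writing $U(\beta,t)=e^{-t g(\beta)}$ with $g(\beta)=|\beta|\,e^{-i(-1)^{k}\mathrm{sgn}(\beta)\frac{\pi}{2(2k+1)}}$, the transform of \eqref{h-oEQ} reads $\left[(-g(\beta))^{2k+1}+(-i\beta)^{2k+1}\right]U=0$, so everything reduces to the identity $g(\beta)^{2k+1}=(-i\beta)^{2k+1}$. The crux, and the only genuinely delicate point, is the phase collapse $(2k+1)\cdot\frac{\pi}{2(2k+1)}=\frac{\pi}{2}$: raising $g$ to the power $2k+1$ turns the exponent into $e^{-i(-1)^{k}\mathrm{sgn}(\beta)\frac{\pi}{2}}=-i(-1)^{k}\mathrm{sgn}(\beta)$, whence $g(\beta)^{2k+1}=-i(-1)^{k}\beta^{2k+1}$, which is precisely $(-i\beta)^{2k+1}=(-i)^{2k+1}\beta^{2k+1}=-i(-1)^{k}\beta^{2k+1}$. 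Thus the bracket vanishes identically in $\beta$ and the density solves \eqref{h-oEQ}. The main obstacle throughout is purely bookkeeping: keeping the factors $(-1)^{k}$ and $\mathrm{sgn}(\beta)$ consistently aligned, and justifying the interchange of the $x$-integral with the stable expectation for the signed pseudo-measure.
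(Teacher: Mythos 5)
Your proposal is correct and follows essentially the same route as the paper's proof: compute the Fourier transform of the composition by conditioning on the subordinator, substitute $\xi=(-1)^k\beta^{2k+1}$ into the stable characteristic function with $\alpha=\gamma=\tfrac{1}{2k+1}$, recognize the resulting symbol $e^{-t|\beta|\exp(-i\frac{\pi}{2(2k+1)}(-1)^k\operatorname{sgn}\beta)}$ as a Cauchy characteristic function, and check the transformed equation $\partial_t^{2k+1}U+(-i\beta)^{2k+1}U=0$. You in fact make explicit the final phase-collapse identity $g(\beta)^{2k+1}=(-i\beta)^{2k+1}$ that the paper leaves implicit after displaying the $t$-derivative, so nothing is missing.
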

\begin{proof}
For $\theta=1$, $\alpha=\gamma=1/2k+1$, in view of \eqref{charstabledens} we have that
\begin{align}
U(\beta, t) & = \int_{-\infty}^{+\infty} e^{i \beta x}Pr\{ X_{2k+1}(S_{\frac{1}{2k+1}}(t)) \in dx \}\notag \\
& =  \int_0^\infty Pr\{S_{\frac{1}{2k+1}}(t) \in ds\} \int_{-\infty}^{+\infty} e^{i \beta x} u_{2k+1}(x, s) \, dx \notag \\
& = \int_{0}^{\infty} e^{i s (-1)^k \beta^{2k+1}} Pr\{ S_{\frac{1}{2k+1}}(t) \in ds \}\notag \\
& =  \exp\left( - t \Big| (-1)^k \beta^{2k+1} \Big|^{\frac{1}{2k+1}} \cos \frac{\pi}{2 (2k+1)} \left( 1 - i  \textrm{ sgn }\Big((-1)^k \beta^{2k+1}\Big) \tan \frac{\pi }{2 (2k+1)} \right) \right) \notag \\
& = \exp\left( - t | \beta | \left( \cos \frac{\pi}{2 (2k+1)} - i (-1)^k \frac{\beta}{|\beta |} \sin \frac{\pi}{2(2k+1)}\right) \right) \notag \\
& = \exp\left( - t | \beta | \cos \frac{\pi}{2 (2k+1)} - i (-1)^k t \beta \sin \frac{\pi}{2(2k+1)}\right) . \label{furChat}
\end{align}
This is the characteristic function of a Cauchy distribution with scale parameter $t\cos \frac{\pi}{2(2k+1)}$ and location parameter $t (-1)^{k+1} \sin \frac{\pi}{2(2k+1)}$. Formula \eqref{furChat} can also be rewritten as
\begin{align}
U(\beta, t) & = \exp\left( - t | \beta | \left( \cos \frac{\pi}{2 (2k+1)} - i (-1)^k \frac{\beta}{|\beta |} \sin \frac{\pi}{2(2k+1)}\right) \right)\notag\\
& = \exp\left( - t | \beta | \left( \cos \left(\frac{\pi}{2 (2k+1)} (-1)^k \frac{\beta}{|\beta |} \right) - i \sin \left(\frac{\pi}{2 (2k+1)} (-1)^k \frac{\beta}{|\beta |}\right) \right) \right)\notag\\
& =  \exp\left( - t |\beta | e^{-i \frac{\pi}{2(2k+1)} (-1)^k \frac{\beta}{|\beta |}} \right). \label{furchat2}
\end{align}
The Fourier transform of equation \eqref{h-oEQ} becomes 
\begin{equation}
\frac{\partial^{2k+1}U}{\partial t^{2k+1}} + (-i \beta)^{2k+1}U=0.
\end{equation}
The derivative of order $2k+1$ of \eqref{furchat2} is
\begin{equation}
\frac{\partial^{2k+1} U}{\partial t^{2k+1}} (\beta , t) = (-|\beta |)^{2k+1} \left( e^{-i \frac{\pi}{2(2k+1)} (-1)^k \frac{\beta}{|\beta |}} \right)^{2k+1}\, U(\beta , t)
\end{equation}
and this shows that the Cauchy distribution \eqref{densCauchAsym} solves the higher-order Laplace equation \eqref{h-oEQ}.
\end{proof}

\begin{os}
\normalfont
We notice that
\begin{align}
\int_0^\infty Pr\{ X_{2k+1}(S_{\frac{1}{2k+1}}(t)) \in dx \} = & \frac{1}{\pi} \int_{(-1)^{k+1} \tan \frac{\pi}{2(2k+1)}}^{\infty} \frac{dy}{1+y^2} \notag \\
= & \frac{1}{2}\left(1 + \frac{(-1)^k}{2k+1} \right) \label{eq218}
\end{align}
which is somehow in accord with \citet{LCH03}. The results \eqref{densCauchAsym} and \eqref{eq218} show that the mode of the Cauchy law \eqref{densCauchAsym} approaches the origin as $k$ increases. 
\end{os}

Let us consider the process of the form $X_3(S_{\frac{1}{3}}(t))$, $t>0$ where $X_3$ is a pseudo-process whose measure density is governed by the third-order heat equation
\begin{equation}
\frac{\partial u}{\partial t} = - \frac{\partial^3 u}{\partial x^3}, \quad x \in \mathbb{R},\; t>0
\end{equation} 
and $S_{\frac{1}{3}}$ is the stable process of order $1/3$. The distribution of $X_3(S_{\frac{1}{3}}(t))$, $t>0$  reads
\begin{equation}
Pr\{ X_3(S_{\frac{1}{3}}(t)) \in dx \} = dx \int_0^\infty \frac{1}{\sqrt[3]{3s}} Ai\left( \frac{x}{\sqrt[3]{3s}} \right) \frac{t}{s} \frac{1}{\sqrt[3]{3s}} Ai\left( \frac{t}{\sqrt[3]{3s}} \right)\, ds \label{lawXS3}
\end{equation}
where
\begin{equation}
Pr\{ S_{\frac{1}{3}}(t) \in ds \} = ds \frac{t}{s} \frac{1}{\sqrt[3]{3s}} Ai\left( \frac{t}{\sqrt[3]{3s}} \right), \quad s \geq 0,\; t>0 \label{lawS3}
\end{equation}
for which
\begin{align*}
\int_0^\infty Pr\{ S_{\frac{1}{3}}(t) \in ds \} = & \int_{0}^{\infty} ds \frac{t}{s} \frac{1}{\sqrt[3]{3s}} Ai\left( \frac{t}{\sqrt[3]{3s}} \right)\\
= & (w=t/\sqrt[3]{3s}) = 3 \int_0^\infty Ai(w)\, dw = 1.
\end{align*}

\begin{coro}
The law \eqref{lawXS3} solves the higher-order Laplace equation
\begin{equation}
\frac{\partial^3 u}{\partial t^3} + \frac{\partial^3 u}{\partial x^3} = 0, \quad x \in \mathbb{R},\; t>0
\end{equation}
and can be written as
\begin{align}
Pr\{ X_3(S_{\frac{1}{3}}(t)) \in dx \} = & \frac{dx}{\pi} \frac{\frac{\sqrt{3}}{2} t}{\left( x +  \frac{t}{2} \right)^2 + \frac{3t^2}{4}}\label{3ordlaw}\\
= & \frac{dx}{\pi} \frac{3^{1/2}}{2} \frac{t}{x^2 + xt + t^2}\notag \\
= & dx \frac{3^{1/2} \, t}{2 \pi} \frac{x - t}{x^3 - t^3}.\notag
\end{align}
\label{coroAiry}
\end{coro}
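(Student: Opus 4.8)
The plan is to recognize Corollary \ref{coroAiry} as the specialization to $k=1$ of the theorem that produced \eqref{densCauchAsym}, and then to reduce the three displayed forms in \eqref{3ordlaw} to one another by elementary algebra. Since the quantity \eqref{lawXS3} is by construction the density $Pr\{X_3(S_{\frac{1}{3}}(t)) \in dx\}/dx$, the assertion that it solves the third-order Laplace equation follows at once from the theorem once the parameters are matched.

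First I would set $k=1$, so that $2k+1 = 3$, $\frac{\pi}{2(2k+1)} = \frac{\pi}{6}$ and $(-1)^{k+1} = 1$. Substituting into \eqref{densCauchAsym} gives the asymmetric Cauchy law with scale $t\cos\frac{\pi}{6} = \frac{\sqrt 3}{2}t$ and location $-t\sin\frac{\pi}{6} = -\frac{t}{2}$, which is exactly the first line of \eqref{3ordlaw}. This already settles the differential equation: the general theorem shows, through the Fourier-transform chain culminating in \eqref{furChat}, that the density \eqref{densCauchAsym} satisfies \eqref{h-oEQ}, and for $k=1$ equation \eqref{h-oEQ} is the third-order Laplace equation.

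It remains to check that the Airy integral \eqref{lawXS3} indeed evaluates to this Cauchy density. Rather than attacking the double Airy integral head-on, I would observe that \eqref{lawXS3} is precisely the subordination integral $\int_0^\infty u_3(x,s)\,Pr\{S_{\frac{1}{3}}(t)\in ds\}$ obtained by integrating the fundamental solution $u_3(x,s) = \frac{1}{\sqrt[3]{3s}}Ai(\frac{x}{\sqrt[3]{3s}})$ of the third-order heat equation against the stable density \eqref{lawS3}. Its Fourier transform is therefore the one computed in the theorem's proof with $k=1$, namely $\exp(-t|\beta|\cos\frac{\pi}{6} - it\beta\sin\frac{\pi}{6})$, and inversion of this characteristic function returns the Cauchy density of the first line of \eqref{3ordlaw}. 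Alternatively one could insert the integral representation $Ai(z) = \frac{1}{\pi}\int_0^\infty\cos(\frac{\xi^3}{3}+z\xi)\,d\xi$ into \eqref{lawXS3} and integrate in $s$ by Fubini, but the subordination route is cleaner.

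Finally I would close the chain of equalities in \eqref{3ordlaw} by hand: completing the square gives $(x+\frac{t}{2})^2 + \frac{3t^2}{4} = x^2 + xt + t^2$, which turns the first form into the second, while factoring $x^3 - t^3 = (x-t)(x^2+xt+t^2)$ gives $\frac{x-t}{x^3-t^3} = \frac{1}{x^2+xt+t^2}$, which turns the second form into the third. I do not expect any genuine obstacle here: once the identification with the $k=1$ case is made, the only computation is the elementary algebra, and the evaluation of the Airy integral is handed to us by the Fourier argument already carried out for the general theorem.
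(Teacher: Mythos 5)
Your overall route --- subordination, Fourier inversion, the stable characteristic function \eqref{charstabledens} with $\alpha=\gamma=\tfrac13$, and the elementary identities $(x+\tfrac{t}{2})^2+\tfrac{3t^2}{4}=x^2+xt+t^2$ and $x^3-t^3=(x-t)(x^2+xt+t^2)$ --- is the same as the paper's. But there is a genuine gap at the pivotal step. The corollary is a statement about the explicit double Airy integral \eqref{lawXS3}, and when you write that ``its Fourier transform is therefore the one computed in the theorem's proof with $k=1$'' you are tacitly assuming that the Airy kernel $\frac{t}{s}\frac{1}{\sqrt[3]{3s}}Ai\bigl(\frac{t}{\sqrt[3]{3s}}\bigr)$ is the density of the positively skewed stable subordinator $S_{\frac{1}{3}}(t)$ whose characteristic function is \eqref{charstabledens} with $\theta=1$. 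The theorem's computation \eqref{furChat} uses only that abstract characteristic function; it says nothing about the concrete Airy representation. Identifying the two is precisely the content of the corollary, and it is where the paper's proof does all its work: the authors expand $Ai$ in its power series, match it term by term against the series representation \eqref{serieslawS} of the stable density at $\alpha=\gamma=\tfrac13$, and invoke the scaling property \eqref{selfsimilar} to conclude that \eqref{lawS3} really is $p_{1/3}(s;\tfrac13,t)$. Only then is the evaluation \eqref{eq10} of the integral \eqref{intFourier} legitimate. Without this step your argument shows that $X_3(S_{\frac{1}{3}}(t))$ has the stated Cauchy law --- which is just the theorem already proved --- but not that the Airy integral \eqref{lawXS3} equals that law.

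Your proposed alternative of inserting $Ai(z)=\frac{1}{\pi}\int_0^\infty\cos\bigl(\frac{\xi^3}{3}+z\xi\bigr)\,d\xi$ and applying Fubini runs into the same obstruction: you would still have to carry out the $s$-integration of the Airy kernel, i.e.\ establish its Laplace--Fourier transform, which is equivalent to the missing identification. If you want to bypass the series comparison, the paper's subsequent remark gives a genuinely different evaluation for $x,t>0$, writing $Ai$ in terms of the modified Bessel function $K_{1/3}$ and computing the double integral directly via formula 6.521 of Gradshteyn--Ryzhik to recover \eqref{renorm3ord}; some such explicit computation is unavoidable.
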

\begin{proof}
The Fourier transform of \eqref{lawXS3} becomes
\begin{equation}
\int_{-\infty}^{\infty} e^{i\beta x} Pr\{X_3(S_{\frac{1}{3}}(t)) \in dx \} = \int_0^\infty e^{-i\beta^3 s } Pr\{ S_{\frac{1}{3}}(t) \in ds \}. \label{intFourier}
\end{equation}
We show that \eqref{lawS3} is a stable law of order $1/3$. In view of the representation of the the Airy function $(4.10)$ of \citet{OB09}
\begin{equation}
Ai(w) = \frac{3^{-2/3}}{\pi} \sum_{k=0}^\infty \frac{(3^{1/3} w)^k }{k!} \sin\left(  \frac{2\pi}{3} (k+1) \right) \Gamma\left( \frac{k+1}{3} \right)
\end{equation}
we can write that
\begin{align*}
\frac{t}{s} \frac{1}{\sqrt[3]{3s}} Ai\left( \frac{t}{\sqrt[3]{3s}} \right) = & \frac{t}{3\pi s \sqrt[3]{s}} \sum_{k=0}^{\infty} \left( \frac{t}{\sqrt[3]{s}} \right)^{k} \frac{1}{k!} \sin\left( \frac{2\pi}{3}(k+1) \right) \Gamma\left( \frac{k+1}{3} \right)
\end{align*}
We consider the series expansion \eqref{serieslawS} of the stable density (with $t=1$)  for which \eqref{charstabledens} holds true. For $\alpha=\gamma=1/3$ (that is $\theta=+1$), $x=s/t^3$ in \eqref{serieslawS} we get that
\begin{align*}
p_{\frac{1}{3}}\left(\frac{s}{t^3}; \frac{1}{3}, 1 \right) = & \frac{1}{3\pi} \sum_{k=0}^{\infty} \frac{(-1)^k}{k!} \left(  \frac{s}{t^3} \right)^{-\frac{k+1}{3} - 1} \sin\left( \frac{\pi}{3}(k+1) \right) \Gamma\left( \frac{k+1}{3} \right)\\
= & (\textrm{by } 4.5 \textrm{ of \cite{OB09}} )\\
= & \frac{1}{3\pi} \frac{t^4}{s\sqrt[3]{s}} \sum_{k=0}^{\infty} \left( \frac{t}{\sqrt[3]{s}} \right)^k \frac{1}{k!} \sin\left( \frac{2\pi}{3}(k+1) \right) \Gamma\left( \frac{k+1}{3} \right)\\
= & t^3 \left[ \frac{t}{s} \frac{1}{\sqrt[3]{3s}} Ai\left( \frac{t}{\sqrt[3]{3s}} \right) \right]
\end{align*}
and thus, from \eqref{selfsimilar}, we have that
\begin{equation*}
\frac{1}{t^3} p_{\frac{1}{3}}\left(\frac{s}{t^3}; \frac{1}{3}, 1 \right) = p_{\frac{1}{3}}\left( s; \frac{1}{3}, t \right) = \frac{t}{s} \frac{1}{\sqrt[3]{3s}} Ai\left( \frac{t}{\sqrt[3]{3s}} \right), \quad s,t>0.
\end{equation*}
We now evaluate the integral \eqref{intFourier}. We have that 
\begin{align}
 & \int_0^\infty e^{-i\beta^3 s } Pr\{ S_{\frac{1}{3}}(t) \in ds \} \nonumber \\
= & \exp\left( - \cos \frac{\pi}{6} \, t | -\beta^3 |^{\frac{1}{3}} \left( 1 -  i \textrm{ sgn } (-\beta^3) \tan\frac{\pi}{6} \right) \right)\nonumber \\
= & \exp\left( - \frac{\sqrt{3}}{2} t |\beta | \left( 1 +  i \textrm{ sgn } (\beta) \frac{1}{\sqrt{3}} \right) \right)\nonumber \\
= & \exp\left( - \frac{\sqrt{3}}{2} t |\beta | -  i \frac{t}{2} \beta \right) \label{eq10}
\end{align}
since $\textrm{sgn }(-\beta^3) = \textrm{sgn }(-\beta)= -\textrm{sgn }(\beta) = -\frac{\beta}{|\beta |}$. From \eqref{eq10} we infer that
\begin{align}
Pr\{ X_3(S_{\frac{1}{3}}(t)) \in dx \} = & \frac{dx}{2\pi} \int_{-\infty}^{+\infty} e^{- i \beta x} \exp\left( - \frac{\sqrt{3}}{2} t |\beta | -  i \frac{t}{2} \beta \right) \, d\beta \label{acc11}\\
= & \frac{dx}{\pi} \frac{\frac{\sqrt{3}}{2} t}{\left( x +  \frac{t}{2} \right)^2 + \frac{3t^2}{4}} = \frac{dx}{\pi} \frac{3^{1/2}}{2} \frac{t}{x^2 + xt + t^2}\nonumber \\ 
= & dx \frac{3^{1/2} \, t}{2 \pi} \frac{x - t}{x^3 - t^3} \nonumber
\end{align}
\end{proof}

\begin{os}
\normalfont
We observe that the r.v. $X_3(S_{\frac{1}{3}}(t))$ possesses Cauchy distribution with scale parameter $\sqrt{3}t/2$ and location parameter $-t/2$. Furthermore, it solves the third-order Laplace-type equation
\begin{equation}
\frac{\partial^3 u}{\partial t^3} + \frac{\partial^3 u}{\partial x^3} = 0.
\end{equation}
\end{os}

\begin{os}
\normalfont
From the fact that
\begin{equation}
\frac{1}{\sqrt[3]{3t}} Ai\left( \frac{x}{\sqrt[3]{3t}} \right) = \frac{1}{3\pi} \sqrt{\frac{x}{t}} K_{1/3}\left( \frac{2}{3^{3/2}} \frac{x^{3/2}}{\sqrt{t}} \right), \quad x,t>0
\end{equation}
we can write, for $x >0$,
\begin{align}
Pr\{ X_3(S_{\frac{1}{3}}(t)) \in dx \}/dx = & \int_{0}^{\infty}  \frac{1}{3\pi} \sqrt{\frac{x}{s}} K_{1/3}\left( \frac{2}{3^{3/2}} \frac{x^{3/2}}{\sqrt{s}} \right) \, \frac{t}{s} \frac{1}{3\pi} \sqrt{\frac{t}{s}} K_{1/3}\left( \frac{2}{3^{3/2}} \frac{t^{3/2}}{\sqrt{s}} \right)\, ds\\
= & \frac{2\sqrt{xt^3}}{3^2\pi^2} \int_{0}^\infty s \, K_{1/3}\left( \frac{2 x^{3/2}}{3^{3/2}} s \right)\, K_{1/3}\left( \frac{2t^{3/2}}{3^{3/2}} s \right)\, ds.
\end{align}
In view of (see \cite[formula 6.521]{GR})
\begin{equation*}
\int_0^\infty s \, K_{\nu}(ys)\,K_{\nu}(zs)\, ds = \frac{\pi (yz)^{-\nu}(y^{2\nu} - z^{2\nu})}{2\sin \pi \nu\, (y^2 - z^2)}, \quad \Re\{ y+z\}>0,\; |\Re\{ \nu \}|<1
\end{equation*}
we get that
\begin{equation}
Pr\{ X_3(S_{\frac{1}{3}}(t)) \in dx \} = dx\,\frac{3^{1/2} \, t}{2 \pi} \frac{x - t}{x^3 - t^3}, \quad x,t>0 \label{renorm3ord}
\end{equation}
which coincides with \eqref{3ordlaw}.
\end{os}

\begin{figure}[h!]
  \caption{The profile of the function \eqref{3ordlaw}.}
  \centering
    \includegraphics[width=0.4\textwidth]{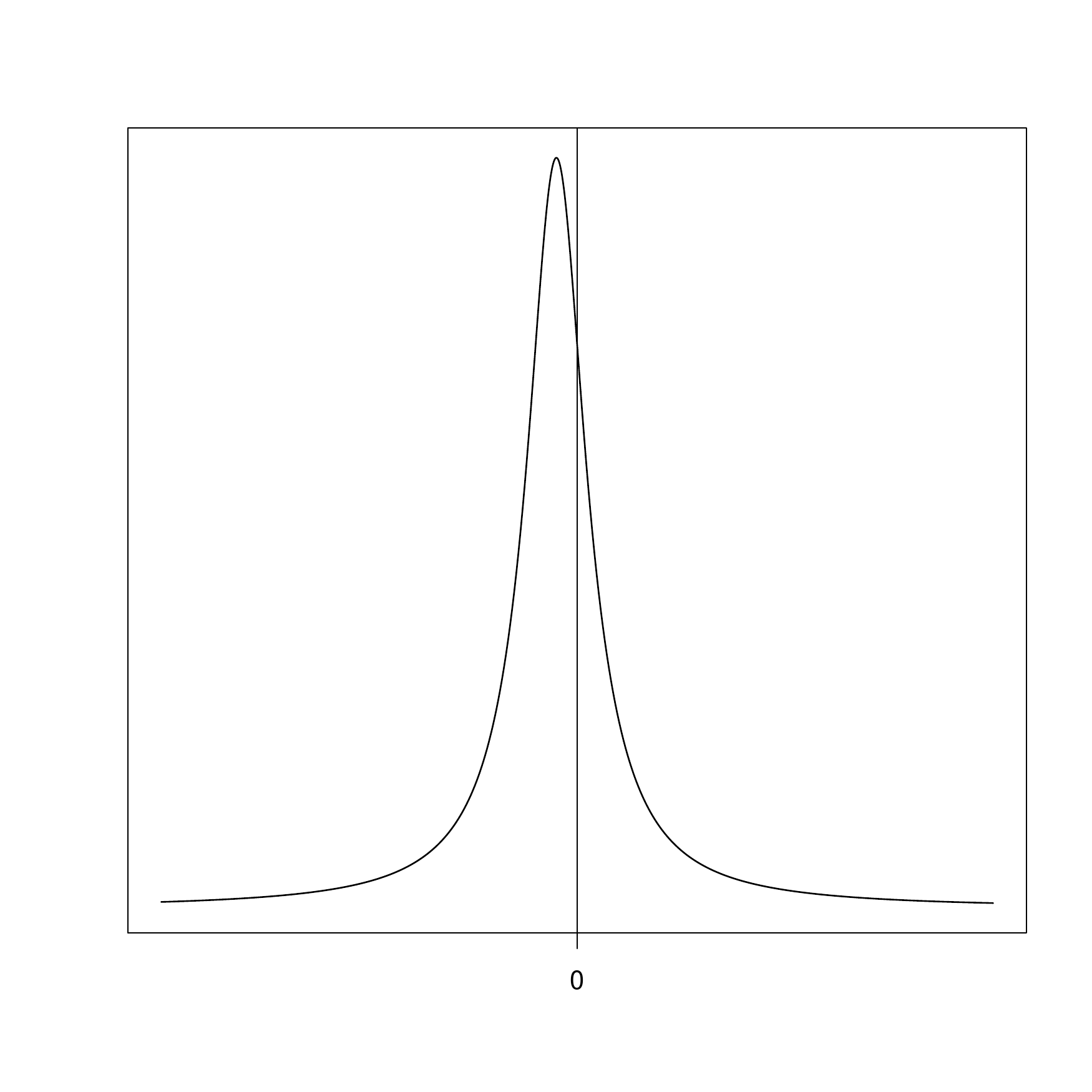}
\end{figure}

The Cauchy densities pertaining to the composition $X_{\frac{1}{2k+1}}(S_{\frac{1}{2k+1}}(t))$, $t>0$, solve also a second-order p.d.e. as we show in the next theorem.
\begin{te}
The Cauchy densities 
\begin{equation}
f(x,t;m) = \frac{1}{\pi} \frac{t \cos \frac{\pi}{2m}}{(x + t \sin \frac{\pi}{2m})^2 + t^2 \cos^2 \frac{\pi}{2m}}, \quad m \in \mathbb{N}
\label{osforE1}
\end{equation}
satisfy the following second-order equation
\begin{equation}
\frac{\partial^2 f}{\partial t^2} + \frac{\partial^2 f}{\partial x^2} = 2 \sin \frac{\pi}{2m} \frac{\partial^2 f}{\partial x \partial t}, \quad x \in \mathbb{R}, \; t>0.\label{oseqE1}
\end{equation}
\end{te}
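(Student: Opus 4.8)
The plan is to pass to Fourier transforms, exactly as in the proof of the preceding theorem, so that the second-order equation \eqref{oseqE1} is converted into a single algebraic identity. Writing $U(\beta,t) = \int_{-\infty}^{+\infty} e^{i\beta x} f(x,t;m)\,dx$, I would first observe that \eqref{osforE1} is a Cauchy law with location parameter $-t\sin\frac{\pi}{2m}$ and scale parameter $t\cos\frac{\pi}{2m}$, so that its characteristic function is
\[
U(\beta,t) = \exp\left( -i t\beta \sin\frac{\pi}{2m} - t|\beta|\cos\frac{\pi}{2m} \right) = e^{-t\, a(\beta)},
\]
where I set $a(\beta) = |\beta|\cos\frac{\pi}{2m} + i\beta\sin\frac{\pi}{2m}$. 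The useful compact form, which follows from $\beta = \operatorname{sgn}(\beta)\,|\beta|$, is
\[
a(\beta) = |\beta|\, e^{\,i\,\operatorname{sgn}(\beta)\frac{\pi}{2m}}.
\]

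Next I would transform each term of \eqref{oseqE1}. With the convention above one has $\partial_x \mapsto -i\beta$ and $\partial_t \mapsto -a(\beta)$ acting on $U$, whence $\partial_x^2 f \mapsto -\beta^2 U$, $\partial_t^2 f \mapsto a(\beta)^2 U$, and $\partial_x\partial_t f \mapsto i\beta\, a(\beta)\, U$. Dividing through by $U$, the equation \eqref{oseqE1} is therefore equivalent to the scalar identity
\[
a(\beta)^2 - \beta^2 = 2 i\beta\, a(\beta)\sin\frac{\pi}{2m}.
\]

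To verify this I would substitute $a(\beta) = |\beta|\,e^{\,i\,\operatorname{sgn}(\beta)\frac{\pi}{2m}}$ and use $|\beta|^2 = \beta^2$ together with $\beta|\beta| = \operatorname{sgn}(\beta)\,\beta^2$. Writing $s = \operatorname{sgn}(\beta)$ and $\phi = \frac{\pi}{2m}$, the left-hand side becomes $\beta^2\big(e^{2is\phi} - 1\big)$ while the right-hand side becomes $2 i s \beta^2 e^{is\phi}\sin\phi$. The claim then collapses to the elementary identity $e^{2i\theta} - 1 = 2 i e^{i\theta}\sin\theta$ applied with $\theta = s\phi$, on noting that $\sin(s\phi) = s\sin\phi$. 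Since this holds for both $s = +1$ and $s = -1$, the identity is valid for all $\beta$, and inverting the Fourier transform establishes \eqref{oseqE1}.

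The computation is short, and the only point requiring genuine care is the treatment of the nonanalytic symbol $a(\beta)$: one must keep track of $\operatorname{sgn}(\beta)$ and of the distinction between $\beta$ and $|\beta|$ in the mixed term $i\beta\,a(\beta)$, and check that the resulting identity holds separately on the two half-lines $\beta>0$ and $\beta<0$ rather than only on one of them. This is the main (and mild) obstacle; everything else is the standard passage between a constant-coefficient equation and its Fourier symbol.
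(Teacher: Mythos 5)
Your proof is correct, but it takes a genuinely different route from the paper. You pass to the characteristic function $U(\beta,t)=e^{-t|\beta|e^{i\,\mathrm{sgn}(\beta)\pi/2m}}$ and reduce \eqref{oseqE1} to the symbol identity $a(\beta)^2-\beta^2=2i\beta a(\beta)\sin\frac{\pi}{2m}$, which you verify separately on each half-line via $e^{2i\theta}-1=2ie^{i\theta}\sin\theta$; the algebra is right, including the careful handling of $\beta|\beta|=\mathrm{sgn}(\beta)\beta^2$ in the mixed term. The paper instead argues entirely in real variables: it sets $u=t\cos\frac{\pi}{2m}$, $v=x+t\sin\frac{\pi}{2m}$, observes that $f(u,v)=\frac{1}{\pi}\frac{u}{u^2+v^2}$ is harmonic in $(u,v)$, and then pushes the identity $\partial_u^2 f+\partial_v^2 f=0$ through the chain rule to obtain \eqref{oseqE1} directly. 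The paper's argument is more elementary in that it never touches the nonanalytic symbol $|\beta|$ and needs no Fourier inversion or interchange of differentiation and integration (which your route tacitly assumes, harmlessly here since $f$ is smooth with rapidly decaying $x$-derivatives for fixed $t>0$); it also makes transparent \emph{why} the equation holds, namely that $f$ is a harmonic function evaluated along a sheared coordinate frame. Your route, on the other hand, is shorter, matches the Fourier-transform technique used in the other theorems of the paper, and essentially anticipates the remark that follows the theorem, where the characteristic function is identified as the bounded solution of $\frac{d^2F}{dt^2}+2i\beta\sin\frac{\pi}{2m}\frac{dF}{dt}-\beta^2F=0$.
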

\begin{proof}
It is convenient to write \eqref{osforE1} as a composed function
$$f(u, v) = \frac{1}{\pi} \frac{u}{u^2 + v^2}$$
where
$$u= t \cos \frac{\pi}{2m}, \qquad v= x + t\sin \frac{\pi}{2m}.$$
Since
\begin{align*}
& \frac{\partial f}{\partial t} = \cos \frac{\pi}{2m}\frac{\partial f}{\partial u} + \sin \frac{\pi}{2m} \frac{\partial f}{\partial v}\\
& \frac{\partial^2 f}{\partial t^2} = \cos^2 \frac{\pi}{2m} \frac{\partial^2 f}{\partial u^2} + 2 \cos \frac{\pi}{2m} \sin \frac{\pi}{2m} \frac{\partial^2 f}{\partial u \partial v} + \sin^2 \frac{\pi}{2m} \frac{\partial^2 f}{\partial v^2}\\
& \frac{\partial f}{\partial x} = \frac{\partial f}{\partial v} \quad \textrm{ and } \quad \frac{\partial^2 f}{\partial x^2} = \frac{\partial^2 f}{\partial v^2}
\end{align*}
and
$$\frac{\partial^2 f}{\partial u^2} + \frac{\partial^2 f}{\partial v^2} = 0$$
we have that
\begin{align*}
\frac{\partial^2 f}{\partial t^2} + \frac{\partial^2 f}{\partial x^2} = & \cos^2 \frac{\pi}{2m} \frac{\partial^2 f}{\partial u^2} + \frac{\partial^2 f}{\partial v^2} + 2\sin \frac{\pi}{2m} \cos \frac{\pi}{2m} \frac{\partial^2 f}{\partial u \partial v} + \sin^2 \frac{\pi}{2m} \frac{\partial^2 f}{\partial v^2}\\
= & \frac{\partial^2 f}{\partial v^2} \left[ 1- \cos^2 \frac{\pi}{2m} + \sin^2 \frac{\pi}{2m} \right] + 2 \sin \frac{\pi}{2m} \cos \frac{\pi}{2m} \frac{\partial^2 f}{\partial u \partial v}\\
= & 2 \sin \frac{\pi}{2m} \frac{\partial}{\partial v} \left[ \sin \frac{\pi}{2m} \frac{\partial f}{\partial v} + \cos \frac{\pi}{2m} \frac{\partial f}{\partial u} \right]\\
= & 2 \sin \frac{\pi}{2m} \frac{\partial}{\partial x} \frac{\partial f}{\partial t}
\end{align*}
\end{proof}

\begin{os}
\normalfont
The characteristic function of \eqref{osforE1} is
$$\int_{-\infty}^{+\infty} e^{i\beta x} f(x, t; m) dx = e^{-t |\beta | \cos \frac{\pi}{2m} - i \beta t \sin \frac{\pi}{2m}}$$
and can be obtained by considering the bounded solution to the Fourier transform  of \eqref{oseqE1}
$$\frac{d^2 F}{dt^2} + 2i \beta \sin \frac{\pi}{2m} \frac{dF}{dt} - \beta^2 F = 0. $$
\end{os}

For the even-order Laplace equations we have the following result.
\begin{te}
The solution to the higher-order Laplace-type equation
\begin{equation}
\frac{\partial^{2n} u}{\partial t^{2n}} = (-1)^n \frac{\partial^{2n} u}{\partial x^{2n}}, \quad x \in \mathbb{R},\; t>0\label{h-oEQ-even}
\end{equation}
subject to the initial conditions
\begin{equation}
\begin{cases}
u(x, 0)=\delta(x)\\
\frac{\partial^k u}{\partial t^k}(x, t)\Big|_{t=0^+} =  \frac{(-1)^k \, k!}{\pi |x|^{k+1}}  \cos \frac{\pi (k+1)}{2}, \quad 0< k < 2n
\end{cases}
\label{iconPP}
\end{equation}
is the classical Cauchy distribution given by
\begin{equation}
u(x, t) =Pr\{ X_{2n}(S_{\frac{1}{2n}}(t)) \in dx \}/dx = \frac{t}{\pi (x^2 + t^2)}, \quad x \in \mathbb{R},\; t>0 
\end{equation}
where $X_{2n}(t)$, $t>0$ is a pseudo-process such that
$$\mathbb{E} e^{i \beta X_{2n}(t)} = e^{-t \beta^{2n}}.$$
\end{te}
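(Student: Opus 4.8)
The plan is to follow the same Fourier-transform strategy used in the two preceding theorems, organising the argument in three steps: identify the law of the composition, check the equation \eqref{h-oEQ-even}, and verify the $2n$ initial data \eqref{iconPP}.

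First I would obtain the distribution of $X_{2n}(S_{\frac{1}{2n}}(t))$ by conditioning on the subordinator. Setting $U(\beta,t)=\int_{-\infty}^{+\infty}e^{i\beta x}\,Pr\{X_{2n}(S_{\frac{1}{2n}}(t))\in dx\}$ and using $\int_{-\infty}^{+\infty}e^{i\beta x}u_{2n}(x,s)\,dx=\mathbb{E}e^{i\beta X_{2n}(s)}=e^{-s\beta^{2n}}$, I get $U(\beta,t)=\int_0^\infty e^{-s\beta^{2n}}\,Pr\{S_{\frac{1}{2n}}(t)\in ds\}=\mathbb{E}e^{-\beta^{2n}S_{\frac{1}{2n}}(t)}$. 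Because $S_{\frac{1}{2n}}$ is the positively skewed ($\theta=1$) stable subordinator of order $1/2n\in(0,1)$, its Laplace transform is $\mathbb{E}e^{-\lambda S_{\frac{1}{2n}}(t)}=e^{-t\lambda^{1/2n}}$ for $\lambda\ge 0$, which is the $\theta=1$ specialisation of \eqref{charstabledens}. Since $2n$ is even, $\lambda=\beta^{2n}\ge0$ and $(\beta^{2n})^{1/2n}=|\beta|$, so $U(\beta,t)=e^{-t|\beta|}$; this is the characteristic function of the standard Cauchy law $t/[\pi(x^2+t^2)]$, and Fourier inversion yields the stated density.

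Next I would check \eqref{h-oEQ-even} directly in Fourier variables, which is immediate: on one side $\partial_t^{2n}U=|\beta|^{2n}U=\beta^{2n}U$, while on the other $(-1)^n\partial_x^{2n}$ transforms into $(-1)^n(-i\beta)^{2n}U=(-1)^n(-1)^n\beta^{2n}U=\beta^{2n}U$, using $(-i)^{2n}=(-1)^n$. The two sides coincide, so the Cauchy density solves the equation, and $U(\beta,0)=1$ gives the initial condition $u(x,0)=\delta(x)$.

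The main work, and the delicate step, is matching the remaining $t$-derivatives in \eqref{iconPP}, which is what pins down this particular solution of the $2n$-th order problem. I would use the partial-fraction form $u(x,t)=\frac{1}{2\pi i}\left(\frac{1}{x-it}-\frac{1}{x+it}\right)$ together with $\partial_t^{k}(x\mp it)^{-1}=(\pm i)^k\,k!\,(x\mp it)^{-(k+1)}$, so that $\partial_t^k u(x,0^+)=\frac{k!}{2\pi i\,x^{k+1}}\big(i^k-(-i)^k\big)$. The factor $i^k-(-i)^k=i^k\big(1-(-1)^k\big)$ vanishes for even $k$, matching $\cos\frac{\pi(k+1)}{2}=0$ there, and equals $2i^k$ for odd $k$. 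For odd $k$ the exponent $k+1$ is even, so $x^{k+1}=|x|^{k+1}$ for every $x\neq0$, and a brief simplification using $i^{k-1}=(-1)^{(k-1)/2}$ and the identity $\cos\frac{\pi(k+1)}{2}=-\sin\frac{\pi k}{2}$ reduces $\partial_t^k u(x,0^+)$ to the prescribed $\frac{(-1)^k k!}{\pi|x|^{k+1}}\cos\frac{\pi(k+1)}{2}$. The points requiring care are the sign bookkeeping in the branch of $x^{k+1}$ for $x<0$ --- harmless precisely because only odd $k$ survive, where $k+1$ is even --- and the fact that these are pointwise identities valid for $x\neq0$, the singular contribution at the origin being carried by the $\delta(x)$ datum.
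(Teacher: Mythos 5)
Your proposal is correct and follows the paper's own strategy for the first two steps: conditioning on the subordinator to get $U(\beta,t)=\mathbb{E}e^{-\beta^{2n}S_{1/2n}(t)}=e^{-t(\beta^{2n})^{1/2n}}=e^{-t|\beta|}$ (you take the principal $2n$-th root directly, where the paper lists all $2n$ branches $e^{-t|\beta|e^{i\pi r/n}}$ and singles out $r=0$ as the only one that is a genuine characteristic function), and then checking the equation in Fourier variables via $(-i)^{2n}=(-1)^n$. Where you genuinely depart from the paper is in the verification of the higher-order initial data: the paper differentiates under the inversion integral and evaluates the divergent integral $\frac{1}{2\pi}\int_{-\infty}^{+\infty}e^{-i\beta x}(-1)^k|\beta|^k\,d\beta$ distributionally to land on $\frac{(-1)^k k!}{\pi|x|^{k+1}}\cos\frac{\pi(k+1)}{2}$, whereas you differentiate the closed partial-fraction form $\frac{1}{2\pi i}\bigl(\frac{1}{x-it}-\frac{1}{x+it}\bigr)$ directly. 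Your computation is elementary and pointwise for $x\neq 0$, and the sign bookkeeping checks out: $i^k-(-i)^k=2i^k$ for odd $k$ gives $(-1)^{(k-1)/2}k!/(\pi|x|^{k+1})$, which agrees with the prescribed value since $(-1)^k\cos\frac{\pi(k+1)}{2}=(-1)^{(k+3)/2}=(-1)^{(k-1)/2}$ for odd $k$. The trade-off is that your route does not exhibit the Fourier-multiplier form $(-1)^k|\beta|^k$ of the data, which is what the paper uses to tie the initial conditions to the symbol of the operator; in exchange you avoid the regularization implicit in the paper's divergent integral. Both verifications are valid and yield the same conclusion.
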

\begin{proof}
The pseudo-process $X_{2n}(t)$, $t>0$ related to the equation
$$\frac{\partial u}{\partial t} = (-1)^{n+1}\frac{\partial^{2n} u}{\partial t^{2n}} $$
has fundamental solution whose Fourier transform reads
$$\int_{-\infty}^{+\infty} e^{i \beta x} u(x, t)dx = e^{-t \beta^{2n}}.$$
If $S_{\frac{1}{2n}}(t)$, $t>0$ is a stable subordinator with Laplace transform
\begin{equation}
\mathbb{E} \exp \left(-\lambda S_{\frac{1}{2n}}(t) \right) = \exp \left( -t \lambda^{\frac{1}{2n}}\right), \quad \lambda >0, \; t>0 \label{inview219}
\end{equation}
the characteristic function of $X_{2n}(S_\frac{1}{2n}(t))$, $t>0$ becomes
\begin{align}
\int_{-\infty}^{+\infty} e^{i\beta x} Pr\{ X_{2n}(S_{\frac{1}{2n}}(t)) \in dx \} = & \int_0^\infty e^{- s \beta^{2n} } Pr\{ S_{\frac{1}{2n}}(t) \in ds \} \notag \\
= & \exp\left( - t |\beta | e^{i \frac{\pi r}{n}} \right), \quad r=0,1,\ldots , 2n-1\label{fuNOk}
\end{align}
For $r=0$, we have the characteristic function of the Cauchy symmetric law. For $r \neq 0$ and $n \leq r \leq 2n-1$ we have a function which is not absolutely integrable and, for $0<r<n-1$ is not a characteristic function (but can be regarded as a Cauchy r.v. at a complex time). The functions
$$F_r(\beta , t) = e^{- t |\beta | e^{i\frac{\pi r}{2^n}}}$$ 
for all $0 \leq r \leq 2n-1$ are solutions to
$$\frac{\partial^{2n}F_r}{\partial t^{2n}} = (-1)^{n+1}F_r.$$
We now check that for $0\leq k \leq 2n-1$ the initial conditions \eqref{iconPP} are verified by the Cauchy distribution. Indeed, 
\begin{align*}
\frac{\partial^k u}{\partial t^{k}}(x, t)\Big|_{t=0} = & \frac{\partial^k}{\partial t^{k}} \left( \frac{1}{2\pi} \int_{-\infty}^{+\infty} e^{-i\beta x} e^{-t |\beta |}d\beta \right) \Bigg|_{t=0}\\
= & \frac{1}{2\pi} \int_{-\infty}^{+\infty} e^{-i\beta x} (-1)^k |\beta |^{k} d\beta\\
= &  \frac{(-1)^k k!}{\pi |x|^{k+1}} \cos\left( \frac{\pi (k+1)}{2} \right).
\end{align*}
\end{proof}

\begin{os}
\normalfont
We notice that for $n=1$ the problem above becomes
\begin{equation*}
\frac{\partial^{2} u}{\partial t^{2}} = - \frac{\partial^{2} u}{\partial x^{2}}, \quad x \in \mathbb{R},\; t>0
\end{equation*}
subject to the initial conditions
\begin{equation*}
\begin{cases}
u(x, 0)=\delta(x)\\
\frac{\partial u}{\partial t}(x, t)\Bigg|_{t=0^+} =  \frac{ -1}{\pi |x|^{2}}  \cos \pi
\end{cases}
\end{equation*}
which is in accord with
$$ \frac{\partial}{\partial t} \frac{t}{\pi(x^2 + t^2)} \Big|_{t=0^+} = \frac{1}{\pi x^2}. $$
The connection between wave equations and the composition of two independent Cauchy processes $C^1(|C^2(t)|)$, $t>0$ has been investigated in \citet{DO} and more general results involving the Cauchy process have been presented in \citet{NANE08}.
\end{os}

\begin{os}
\normalfont
We finally notice that the equation
\begin{equation}
\frac{\partial^6 u}{\partial t^6} + \frac{\partial^6 u}{\partial x^6} = 0
\end{equation}
can be decoupled as
\begin{equation}
\left( \frac{\partial^3}{\partial t^3} + i \frac{\partial^3}{\partial x^3} \right) \left( \frac{\partial^3}{\partial t^3} - i \frac{\partial^3}{\partial x^3} \right) u  = 0. \label{eq6ord}
\end{equation}
Form the Corollary \ref{coroAiry}, the solution to \eqref{eq6ord} can be therefore written as
\begin{align}
u(x,t) = & \frac{1}{2\pi} \left[ \frac{\frac{\sqrt{3}}{2} t e^{i\frac{\pi}{6}}}{\left( x + \frac{t e^{i \frac{\pi}{6}}}{2} \right)^2 + \frac{3}{4} t^2 e^{i \frac{\pi}{3}}} + \frac{\frac{\sqrt{3}}{2} t e^{-i\frac{\pi}{6}}}{\left( x + \frac{t e^{-i \frac{\pi}{6}}}{2} \right)^2 + \frac{3}{4} t^2 e^{-i \frac{\pi}{3}}} \right] \notag \\
= & \frac{\sqrt{3}}{2^2\pi} t \Bigg[ \frac{e^{i\frac{\pi}{6}} \left( x^2 + \frac{t^2}{4} e^{-i\frac{\pi}{3}} + xt e^{-i \frac{\pi}{6}} + \frac{3}{4}t^2 e^{-i\frac{\pi}{3}} \right) }{ \left( x^2 + \frac{t^2}{4}e^{-i\frac{\pi}{3}} + xt e^{-i \frac{\pi}{6}} + \frac{3}{4}t^2 e^{-i\frac{\pi}{3}} \right)  \left( x^2 + \frac{t^2}{4} e^{i\frac{\pi}{3}} + xt e^{i \frac{\pi}{6}} + \frac{3}{4}t^2 e^{i\frac{\pi}{3}} \right)} \nonumber \\
& + \frac{e^{-i\frac{\pi}{6}} \left( x^2 + \frac{t^2}{4} e^{i\frac{\pi}{3}} + xt e^{i \frac{\pi}{6}} + \frac{3}{4}t^2 e^{i\frac{\pi}{3}} \right)}{ \left( x^2 + \frac{t^2}{4} e^{-i\frac{\pi}{3}} + xt e^{-i \frac{\pi}{6}} + \frac{3}{4}t^2 e^{-i\frac{\pi}{3}} \right)  \left( x^2 + \frac{t^2}{4} e^{i\frac{\pi}{3}} + xt e^{i \frac{\pi}{6}} + \frac{3}{4}t^2 e^{i\frac{\pi}{3}} \right)} \Bigg] \nonumber \\
= & \frac{\sqrt{3}}{2\pi} t \frac{(x^2 + t^2)\cos \frac{\pi}{6} + xt}{\left( x^2 + t^2 e^{-i\frac{\pi}{3}} + xt e^{-i \frac{\pi}{6}} \right)  \left( x^2 + t^2 e^{i\frac{\pi}{3}} + xt e^{i \frac{\pi}{6}} \right)} \nonumber \\
= & \frac{\sqrt{3}}{2\pi} t \frac{(x^2 + t^2)\cos \frac{\pi}{6} + xt}{\left(x^2 + t^2 + xt \cos \frac{\pi}{6}\right)^2  - 3 x^2 t^2 \sin^2\frac{\pi}{6}}. \label{2cau}
\end{align}
Equation \eqref{eq6ord} is satisfied by the Cauchy density and therefore by the probability law \eqref{2cau} which however is no longer a Cauchy distribution and possesses asymmetric structure.
\end{os}

\end{document}